\documentclass[article]{siamart250211}
\newsiamremark{assumption}{Assumption}
\newsiamremark{remark}{Remark}
\newsiamremark{example}{Example}

\usepackage{amssymb,amsmath}
\usepackage{empheq}
\usepackage{stackrel}
\usepackage[ruled,algo2e]{algorithm2e}
\usepackage{bm}
\usepackage[ulem=normalem,draft,authormarkup=none,deletedmarkup=sout]{changes}

\ifpdf
\hypersetup{
  pdftitle={Second-Order Conditions for Semilinear Infinite-Horizon Control-Constrained Parabolic Problems without Regularization},
  pdfauthor={E.~Casas, and N.~Jork}
}
\fi

\newcommand{\dx}{\,\mathrm{dx}}
\newcommand{\dt}{\,\mathrm{dt}}

\newcommand{\Pb}{\mbox{\rm (P)}\xspace}
\newcommand{\PbT}{\mbox{\rm (P$_T$)}\xspace}
\newcommand{\Pbr}{\mbox{\rm (P$_\rho$)}\xspace}
\newcommand{\PbTr}{\mbox{\rm (P$_{T,\rho}$)}\xspace}

\newcommand{\uad}{U_{\rm ad}}
\newcommand{\uadt}{U_{\rm ad, T}}
\newcommand{\Kad}{K_{\rm ad}}

\title{Second-Order Conditions for Infinite-Horizon Semilinear Parabolic Control Problems without Tikhonov Regularization\thanks{Submitted to the editors DATE} \funding{The first author was supported by MICIU/AEI/10.13039/501100011033/ under research project PID2023-147610NB-I00.}}

\author{Eduardo Casas \thanks{Departamento de Matem\'{a}tica
Aplicada y Ciencias de la Computaci\'{o}n,  Universidad de Cantabria, Spain. (\email{eduardo.casas@unican.es})}
\and Nicolai Jork \thanks{Department of Mathematics, Eberhard-Karls-Universit\"at T\"ubingen, Germany.
(\email{nicolai.jork@uni-tuebingen.de})}
}

\headers{Second-Order Optimality Conditions}{E. Casas \and N. Jork}

\begin{document}

\maketitle
\begin{abstract}
We consider semilinear parabolic optimal control problems subject to Neumann boundary conditions, control constraints, and an infinite time horizon. The control constraints are pointwise in time, but they can be pointwise or integral in the space variable. Crucially, the optimal control problem does not include a Tikhonov regularization in the cost functional, which provides a major difficulty in the extension of the classical finite-horizon theory to infinite-horizon optimal control problems. As a consequence of our findings, we establish a sufficient second-order optimality condition and prove that local optimal states of the finite-horizon problems approximate local optimal states to the infinite-horizon problem as the horizon tends to infinity.
\end{abstract}

\begin{keywords}
semilinear parabolic equation, infinite time horizon, optimal control, control constraints, second order optimality  conditions.
\end{keywords}

\begin{MSCcodes}
35K58, 
35Q93, 
49K20.  
\end{MSCcodes}

\section{Introduction}\label{S1}
\setcounter{equation}{0}

This paper analyzes an infinite-horizon optimal control problem governed by a semilinear parabolic equation. The cost functional does not include the regularizing Tikhonov term. Our main goal is to derive second-order sufficient optimality conditions and apply them to obtain error estimates for approximating the problem with finite-horizon control problems. Second-order optimality conditions and stability results for semilinear parabolic optimal control problems with control constraints are well established in the finite-horizon setting; see \cite{CT2015, Casas-Troltzsch2016, CT2022,  DJV, RT2003} and the monograph \cite{T2010}. In contrast, the infinite-horizon case remains far less developed. With a focus on necessary optimality conditions and in the presence of ODE-constraints, these were considered by Halkin in \cite{zbMATH03471770}. Infinite-horizon problems with ODE constraints and control constraints, incorporating a discount factor, were investigated in \cite{zbMATH02205762, zbMATH06165819}. For related problems that contain state constraints, we refer to \cite{zbMATH07863175, zbMATH07076355}.
While the above works focus on ODE constraints, early results for infinite horizon problems subject to PDE-constraints were established in \cite[Chapter 9]{zbMATH00051863}, \cite[Chapter III.6]{zbMATH03322278}, \cite{zbMATH07086583} and \cite{zbMATH07700209}. More recent contributions include \cite{Bociu-Casas2026, zbMATH07563683, Casas-Kunisch2023C}. 
All of the above mentioned works contain a Tikhonov regularization term in the cost functional. The appearance of this term helps to address the additional difficulties introduced by the infinite horizon and makes results analogous to the finite-horizon theory available. However, this is not the case in \cite{Bociu-Casas2026}, due to the presence of pointwise state constraints.

Without regularization, however, the problem admits further challenges: standard arguments relying on coercivity with respect to the controls fail.
This paper addresses this gap in the literature by studying an unregularized infinite-horizon problem where the cost functional is of $L^2$-state-tracking type. The state equation is a semilinear parabolic PDE with a Neumann boundary condition on a bounded spatial domain $\Omega \subset \mathbb{R}^n$.
In the absence of a Tikhonov term, control constraints arise naturally in order to bound the size of the control. Typically, box constraints are imposed. Besides box constraints, constraints that are pointwise in time but integral in space were studied for optimal control problems that track the velocity of Navier–Stokes flow in a bounded two-dimensional domain; see \cite{Casas-Chrysafinos2026, zbMATH07362115, zbMATH01367200}.

Our main contributions are as follows. First, we derive a second-order sufficient optimality condition for strong local optimality, for both classes of admissible controls considered in this paper. To this end, we extend finite-horizon techniques from \cite{Casas-Chrysafinos2026} using infinite-horizon results from \cite{Casas-Kunisch2023C}. This has required the implementation of new ideas. Second, we show that local solutions of the finite-horizon problems approximate local solutions of the infinite-horizon problem as the time horizon tends to infinity. As an application of the established second-order sufficient condition, we obtain error estimates for the difference between the the finite- and infinite-horizon optimal states.

The paper is organized as follows. In Section \ref{S2} we introduce notation and assumptions, recall well-posedness and differentiability properties of the control-to-state mapping, derive the estimates needed for the linearized and adjoint equations, and state the first-order necessary optimality conditions. Section \ref{S3} is devoted to the derivation of a second-order sufficient optimality condition. In Section \ref{S4} we investigate the approximation of the infinite-horizon problem by finite-horizon problems, extending the corresponding results for Tikhonov-regularized problems from \cite{Casas-Kunisch2023C}. Appendix A collects auxiliary lemmas.

We come to a precise definition of the optimal control problem studied in this paper
\[
\Pb\quad \min_{u \in \uad^j} J(u):=\frac{1}{2}\int_0^\infty\int_\Omega(y_u - y_d)^2\dx\dt,
\]
where $y_u$ is the solution of the following parabolic state equation with homogeneous Neumann boundary condition
\begin{equation}
\left\{\begin{array}{l}\displaystyle \frac{\partial y}{\partial t} + Ay + f(y) = g + u\chi_\omega\ \text{ in } Q = \Omega \times (0,\infty),\\
\partial_{n_A}y = 0 \ \text{ on } \Sigma = \Gamma \times (0,\infty),\ \ y(0) = y_0 \ \text{ in } \Omega.\end{array}\right.
\label{E1.1}
\end{equation}
Here $\omega$ is a measurable subset of $\Omega$ with $|\omega| > 0$.
Motivated by the works \cite{zbMATH07563683, zbMATH07490298, Casas-Kunisch2023C}, we choose the set of admissible controls to be  $$\uad^j = \{u\in L^2(Q_\omega)\vert \ u(\cdot, t) \in K_{\mathrm{ad}}^j(t)\text{ a.a. } t\in (0,\infty)\}$$ where $\Kad^j(t)$ is a closed, convex and bounded set in $L^2(\omega)$. We consider the following two possibilities for the set $\Kad^j$. For the first choice, we fix a measurable function $\gamma:[0,\infty)\longrightarrow (0,\infty)$ while for the second choice, the box-constraint, we fix two measurable functions $\alpha, \beta:\omega \times [0,\infty) \longrightarrow \mathbb{R}$, such that the set $K_{ad}^j$, $j\in\{1,2\}$, is given by
\begin{align}
K_{\mathrm{ad}}^1(t) &:= B_{\gamma(t)}:= \{ v\in L^2(\omega) \mid \|v\|_{L^2(\omega)}\le \gamma(t) \}
  \label{E1.2},\\
K_{\mathrm{ad}}^2(t)
&:=\{v\in L^2(\omega) \ \vert\ \alpha(\cdot,t)\le v \le \beta(\cdot,t)\ \text{a.e. in }\omega\}
  \label{E1.3}.
\end{align}
Assumptions on the involved function will be given in the next section.

\section{Assumptions and preliminary results}\label{S2}
\setcounter{equation}{0}

We make the following assumptions on the control problem \Pb.
\begin{assumption}\label{A2.1}
Hereafter $\Omega$ denotes a bounded open subset of $\mathbb{R}^n$, $1 \le n \le 3$, with a Lipschitz boundary $\Gamma$. In the case $n = 1$, $\Omega$ is a real interval. $A$ stands for the partial differential operator defined in $\Omega$, which is assumed to be symmetric and is given by
\[
Ay = -\sum_{i, j = 1}^n\partial_{x_j}[a_{ij}(x)\partial_{x_i}y] + a_0y
\]
with $a_0, a_{ij} \in L^\infty(\Omega)$ satisfying
\begin{equation}
\left\{\begin{array}{l}\exists \Lambda > 0 \text{ such that } \sum_{i, j = 1}^na_{ij}(x)\xi_i\xi_j \ge \Lambda|\xi|^2\ \ \forall \xi \in \mathbb{R}^n,\text{  for a.a. } x \in \Omega,\\ \ a_0(x) \ge 0 \text{ and } a_0 \not\equiv 0.\end{array}\right.
\label{E2.1}
\end{equation}
\end{assumption}

We observe that \eqref{E2.1} implies the existence of $\Lambda_A > 0$ such that
\begin{equation}
\Lambda_A\|y\|^2_{H^1(\Omega)} \le \int_\Omega\Big[\sum_{i, j = 1}^na_{ij}(x)\partial_{x_i}y(x)\partial_{x_j}y(x) + a_0(x)y^2(x)\Big]\dx \quad \forall y \in H^1(\Omega).
\label{E2.2}
\end{equation}

\begin{assumption}
The nonlinear function $f:\mathbb{R} \to\mathbb{R}$ is of class $C^2$, $f'(s) \ge 0$ for all $s \in \mathbb{R}$, and $f(0) = 0$.  We also assume that $g \in L^2(Q) \cap  L^p(0,\infty;L^2(\Omega))$, where $p$ is fixed along this paper as follows
\[
p \in (\frac{4}{4 - n},\infty) \text{ if $n = 2$ or 3, and } p \in [2,\infty) \text{ if } n = 1.
\]
For the initial condition in \eqref{E1.1}, we assume $y_0\in L^\infty(\Omega)$.
\label{A2}
\end{assumption}
 In the following, we define $Q_\omega: = \omega \times (0,\infty)$.
\begin{assumption}
The positive function $\gamma$ appearing in the constraint \eqref{E1.2} satisfies $\gamma\in L^1(0,\infty)\cap L^\infty(0,\infty)$. The functions appearing in the constraint \eqref{E1.3}, satisfy $\alpha, \beta \in L^2(Q_\omega) \cap L^\infty(Q_\omega)$. We also suppose that $\alpha(x,t) \le 0 \le \beta(x,t)$ and $\alpha(x,t) < \beta(x,t)$ a.e. in $Q_\omega$. Additionally we assume that $y_d \in L^2(Q) \cap  L^p(0,\infty;L^2(\Omega))$.
\label{A3}
\end{assumption}

From this assumption we infer that
\begin{align*}
&\uad^1 \subset L^1(0,\infty;L^2(\omega)) \cap L^\infty(0,\infty;L^2(\omega)) \subset L^p(0,\infty;L^2(\omega)),\\
&\uad^2 \subset L^2(Q_\omega) \cap L^\infty(0,\infty;L^2(\omega)) \subset L^p(0,\infty;L^2(\omega)).
\end{align*}

Following the standard notation, for every $T \in (0,\infty]$ we consider the Hilbert space $W(0,T) = \{y \in L^2(0,T;H^1(\Omega)) : \partial_ty \in L^2(0,T;H^1(\Omega)^*)\}$ endowed with the norm
\[
\|y\|_{W(0,T)} = \Big(\|y\|^2_{L^2(0,T;H^1(\Omega))} + \|\partial_ty\|^2_{L^2(0,T;H^1(\Omega)^*)}\Big)^{\frac{1}{2}}.
\]
Moreover, the continuous embeddings $W(0,T) \subset C([0,T];L^2(\Omega))$ and $W(0,T) \subset L^2(Q_T)$ are well known, where $Q_T := \Omega \times (0,T)$. Moreover $W(0,T) \subset L^2(Q_T)$ is compact if $T < \infty$. Now, we analyze the state equation \eqref{E1.1}.

\begin{definition}
We say that $y$ is a solution of \eqref{E1.1} if $y \in W(0,\infty) \cap L^\infty(Q)$ and the following identity holds
\begin{equation}
\langle\frac{\partial y}{\partial t},z\rangle + \int_\Omega\Big[\sum_{i, j = 1}^na_{ij}(x)\partial_{x_i}y\partial_{x_j}z(x) + a_0(x)yz + f(y)z\Big]\dx = \int_\Omega[g + u\chi_\omega]z\dx
\label{E2.3}
\end{equation}
\label{D2.1}
\end{definition}
for all $z \in H^1(\Omega)$ and almost everywhere in time, where $\langle\cdot, \cdot\rangle$ denotes the duality pair between $H^1(\Omega)^*$ and $H^1(\Omega)$.

In the next theorem we establish the existence, uniqueness, and regularity of the solutions $y_u$ to the state equation \eqref{E1.1}. 

\begin{theorem}\label{T2.1}
For every $u \in L^2(Q_\omega) \cap L^p(0,\infty;L^2(\omega))$, \eqref{E1.1} has a unique solution $y_u \in W(0,\infty) \cap L^\infty(Q)$. Moreover there exist constants $M_1$ and $M_2$ independent of $u$, $g$, and $y_0$ such that
\begin{align}
&\|y_u\|_{L^2(0,\infty;H^1(\Omega))} + \|y_u\|_{L^\infty(0,\infty;L^2(\Omega))} \le M_1\Big(\|y_0\|_{L^2(\Omega)} + \|g + u\chi_\omega\|_{L^2(Q)}\Big),\label{E2.4}\\
&\|y_u\|_{L^\infty(Q)} \le M_2\Big(\|y_0\|_{L^\infty(\Omega)} + \|g + u\chi_\omega\|_{L^2(Q)} + \|g + u\chi_\omega\|_{L^p(0,\infty;L^2(\omega))}\Big). \label{E2.5}
\end{align}
In addition, there exists a constant $M_{ad}$ such that
\begin{equation}
\|y_u\|_{W(0,\infty)} + \|y_u\|_{L^\infty(Q)} \le M_{ad}\quad \forall u \in \uad^j,\ j\in\{1,2\}.
\label{E2.6}
\end{equation}
Finally, if $\{u_k\}_{k = 1}^\infty \subset \uad^j$, converges weakly to $u$ in $L^2(Q_\omega)$ then $y_{u_k} \rightharpoonup y_u$ in $W(0,\infty)$ and $\lim_{k \to \infty}\|y_{u_k} - y_u\|_{L^\infty(Q_\omega)} = 0$.
\end{theorem}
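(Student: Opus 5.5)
I will follow the standard approach: first solve the problem on finite intervals with a truncated nonlinearity, then derive estimates uniform in the horizon and pass to $T \to \infty$.

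\textbf{Truncated finite-horizon problems.} For $M>0$ let $f_M$ be a globally Lipschitz, nondecreasing truncation of $f$ with $f_M(0)=0$ and $f_M=f$ on $[-M,M]$. On each $Q_T$, $T<\infty$, the equation with $f_M$ has a unique solution in $W(0,T)$. This follows from monotone operator theory or a Galerkin/fixed-point argument, since $f_M$ is monotone. Uniqueness on $[0,T]$ makes these solutions consistent, so they define a solution on $(0,\infty)$.

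\textbf{Estimate \eqref{E2.4}.} Test with $z=y$ and use $f_M(y)y\ge 0$ together with \eqref{E2.2}. This gives
\[
\tfrac12\tfrac{d}{dt}\|y\|^2_{L^2(\Omega)}+\Lambda_A\|y\|^2_{H^1(\Omega)}\le \|g+u\chi_\omega\|_{L^2(\Omega)}\|y\|_{L^2(\Omega)}.
\]
Young's inequality, absorbing $\tfrac{\Lambda_A}{2}\|y\|^2_{H^1}$, and integration over $(0,T)$ give \eqref{E2.4} with constants independent of $T$ and $M$.

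\textbf{Estimate \eqref{E2.5}: the main obstacle.} This estimate must be uniform on the infinite horizon. My plan is to split the solution as $y=y^1+y^2$ via the linear part.
\begin{itemize}
\item[(i)] For the linear problem $\partial_t w+Aw=h$ with $w(0)=y_0$, use exponential decay of the Neumann semigroup. It is exponentially stable because $a_0\not\equiv0$, and by $L^2$–$L^\infty$ smoothing $\|e^{-tA}\|_{L^2\to L^\infty}\le Ct^{-n/4}e^{-\lambda t}$. Then the Duhamel formula with Hölder in time gives
\[
\|w(t)\|_{L^\infty}\le C\|y_0\|_{L^\infty}+C\int_0^t (t-s)^{-n/4}e^{-\lambda(t-s)}\|h(s)\|_{L^2}\,ds .
\]
The integral is bounded by $\|h\|_{L^p(L^2)}$ precisely because $p'n/4<1$, i.e. $p>4/(4-n)$. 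The $L^2(Q)$ norm can be added on the right harmlessly.
\item[(ii)] For the nonlinear problem, I would use a Stampacchia truncation argument. Testing with $(y-k)^+$ for $k\ge\|w\|_{L^\infty}$ and using monotonicity of $f_M$ with $f_M(0)=0$ shows $|y|\le$ the supersolution bound.
\item[(iii)] An equivalent alternative to (ii) is a comparison principle: $\bar y=w$-type super/subsolutions bound $y$, since $f_M(s)s\ge0$.
\end{itemize}
The delicate point is making the constant independent of $T$. I plan to get this either by applying the local-in-time Stampacchia estimate on windows $[\tau,\tau+1]$, whose constants do not depend on $\tau$, or through the decaying semigroup kernel above. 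Once $M>M_2(\dots)$, we have $f_M(y)=f(y)$, so $y$ solves \eqref{E1.1}.

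\textbf{Uniqueness.} Subtract the equations for two bounded solutions and test with the difference. Monotonicity of $f$ gives $\tfrac{d}{dt}\|y_1-y_2\|^2\le0$, and the difference starts at $0$.

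\textbf{Estimate \eqref{E2.6}.} Controls in $\uad^j$ are uniformly bounded in $L^2(Q_\omega)\cap L^p(0,\infty;L^2(\omega))$ by Assumption \ref{A3} and the embeddings noted after it. Combining this with \eqref{E2.4}, \eqref{E2.5}, and a bound for $\partial_t y$ read off from the equation (using $\|f(y)\|_{L^2(Q)}\le C\|y\|_{L^2(Q)}$ on bounded sets, since $f(0)=0$) gives \eqref{E2.6}.

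\textbf{Continuity under weak convergence.} Boundedness gives $y_{u_k}\rightharpoonup \bar y$ in $W(0,\infty)$ along a subsequence. On each $Q_T$ the embedding is compact, so the convergence is strong in $L^2(Q_T)$, and boundedness in $L^\infty$ lets us pass to the limit in $f(y_{u_k})$. Hence $\bar y=y_u$, and the whole sequence converges.

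For the uniform convergence, write $y_{u_k}-y_u=z_k$. It solves a linear equation with potential $\int_0^1 f'(\cdot)\ge0$ and right-hand side $(u_k-u)\chi_\omega$.
\begin{itemize}
\item On finite windows, parabolic Hölder or compactness results (the solution operator maps weakly convergent, $L^p(L^2)$-bounded data to strongly convergent data in $C(\bar Q_T)$ or $L^\infty(Q_T)$) give convergence on $Q_T$.
\item For the tail $t\ge T$, I would use the fact that the admissible controls are uniformly small for large $t$: $\gamma\in L^1$, and $\alpha,\beta\in L^2$, so $\|u\|_{L^p(T,\infty;L^2)}\to0$ uniformly in $u\in\uad^j$. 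Combined with the $L^2$ decay of $z_k(T)$ and the estimate \eqref{E2.5} restarted at time $T$, the tail contributes less than $\varepsilon$.
\end{itemize}
One caveat on the restart: \eqref{E2.5} requires an $L^\infty$ initial datum, so I would use $L^2\to L^\infty$ smoothing over one unit of time instead. This tail uniformity is the second delicate step.
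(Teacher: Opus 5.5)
\textbf{Last assertion.} Here your argument is the paper's. The bound \eqref{E2.6} and the compact embedding $W(0,T)\subset L^2(Q_T)$ identify the weak limit. Uniform H\"older bounds on $\bar Q_T$ (the paper cites \cite{DER2017}, \cite{Lad-Sol-Ura68}) give uniform convergence on finite windows. The tail is controlled by a function $h\in L^2(0,\infty)\cap L^\infty(0,\infty)$ dominating $\|v(t)\|_{L^2(\omega)}$ for all $v\in\uad^j$, with $T$ chosen before $k$. Your restart caveat is unnecessary: convergence on $\bar Q_T$ already gives $\|z_k(T)\|_{L^\infty(\Omega)}\to 0$, and this is exactly the initial datum the paper uses in the restarted estimate.

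\textbf{Existence, uniqueness and \eqref{E2.4}--\eqref{E2.6}.} The paper gives no proof here and simply cites \cite{CKT2025}. Your self-contained route is sound: monotone Lipschitz truncation, the energy estimate, and an $L^\infty$ bound from the exponentially stable, ultracontractive Neumann semigroup. It also shows why $p>4/(4-n)$ is precisely the condition putting $t^{-n/4}e^{-\lambda t}$ in $L^{p'}(0,\infty)$, with time-uniformity coming from the exponential decay. Keep the semigroup option rather than the windowed one. Restarting a local estimate on $[\tau,\tau+1]$ from $\|y(\tau)\|_{L^\infty(\Omega)}$ only bounds $y$ by $\sum_i\|g+u\chi_\omega\|_{L^p(i,i+1;L^2(\Omega))}$, and $\|g+u\chi_\omega\|_{L^p(0,\infty;L^2(\Omega))}$ does not control this sum. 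A windowed proof would instead need a local $L^2\to L^\infty$ estimate combined with \eqref{E2.4}.

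\textbf{One step must be corrected.} In (ii), testing with $(y-k)^+$ for a constant $k$ does not close, because $\int_\Omega(g+u\chi_\omega)(y-k)^+\dx$ has no sign. In (iii), comparing $y$ directly with the linear solution $w$ (data $g+u\chi_\omega$, $y_0$) also fails, since on $\{y>w\}$ one can have $y<0$ and hence $f_M(y)<0$.

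Test instead with $(y-w-k)^+$, where $k=\|w\|_{L^\infty(Q)}$. The difference solves $\partial_t(y-w-k)+A(y-w-k)=-f_M(y)-a_0k$. On the support $y>w+k\ge 0$, so the right-hand side is $\le 0$ there, and the difference starts at $-k\le 0$. This gives $y\le 2\|w\|_{L^\infty(Q)}$, and symmetrically $y\ge -2\|w\|_{L^\infty(Q)}$. Alternatively, compare with the nonnegative solution of the linear problem with data $|g+u\chi_\omega|$ and $|y_0|$.

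The same argument needs only that the potential is nonnegative. It therefore gives the restarted tail estimate for $z_k$ with a constant independent of $f'(\hat y_k)$.
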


\begin{proof}
The reader is referred to \cite{CKT2025} for the proof of this theorem. We only need to prove the last statement. It is clear that both constraints \eqref{E1.2} and \eqref{E1.3} yield the  regularity $u \in L^2(Q_\omega) \cap L^\infty(0,\infty;L^2(\omega)) \subset L^p(0,\infty;L^2(\omega))$ and From \eqref{E2.6} we infer the boundedness of $\{y_{u_k}\}_{k = 1}^\infty$ in $W(0,\infty) \cap L^\infty(Q)$. Then, we can take a subsequence, denoted in the same way, such that $y_{u_k} \stackrel{*}{\rightharpoonup} y$ in $W(0,\infty) \cap L^\infty(Q)$. This implies the strong convergence $y_{u_k} \to y$ in $L^2(Q_T)$ for every $T < \infty$. Then, it is straightforward to pass to the limit in the state equation satisfied by $y_{u_k}$ in $Q_T$ to deduce that $y{\mid_{Q_T}} = {y_u}_{\mid_{Q_T}}$ and, hence, $y = y_u$. Consequently, the whole sequence converges weakly to $y_u$ in $W(0,\infty)$. Let us prove the strong convergence in $L^\infty(Q)$. Let us denote $z_k = y_{u_k} - y_u$. Subtracting the equations satisfied by $y_{u_k}$ and $y_u$ we get
\[
\left\{\begin{array}{l}\displaystyle \frac{\partial z_k}{\partial t} + Az_k + f'(\hat y_k)z_k = (u_k - u)\chi_\omega \text{ in } Q,\\
\partial_{n_A}z_k = 0 \ \text{ on } \Sigma,\ \ z_k(0) = 0 \ \text{ in } \Omega,\end{array}\right.
\]
where $\hat y_k = y_u + \theta_k(y_{u_k} - y_u)$ for some measurable function $\theta_k:Q \longrightarrow [0,1]$. Then, for every $T < \infty$ we deduce from \cite{DER2017} or \cite[\S III.10]{Lad-Sol-Ura68} the existence of $\mu \in (0,1)$ and a constant $\hat C$ such that 
\[
\|z_k\|_{C^{0,\mu}(\bar Q_T)}\! \le\! C_1\|u_k - u\|_{L^p(0,\infty;L^2(\omega))}\! \le\! C_1\|u_k - u\|^{\frac{p - 2}{p}}_{L^\infty(0,\infty;L^2(\omega))}\|u_k - u\|^{\frac{2}{p}}_{L^2(Q_\omega)}\! \le\! C_2.
\]
Using the compactness of the embedding $C^{0,\mu}(\bar Q_T) \subset C(\bar Q_T)$ we infer the strong convergence $\lim_{k \to \infty}\|y_{u_k} - y_u\|_{C(\bar Q_T)} = 0$ for every $T < \infty$. To study what happens in $Q^T = \Omega \times (T,\infty)$, we observe that according to the assumptions on $\uad^j$, $j = 1, 2$, there exists a function $h \in L^2(0,\infty) \cap L^\infty(0,\infty)$ such that $\|v(t)\|_{L^2(\omega)} \le h(t)$ for almost all $t \in (0,\infty)$ and all $v \in \uad^j$. We can take $h = \gamma$ for $j = 1$ and $h = \max\{|\alpha|, \beta\}$ for $j = 2$. Then we have that for every $T < \infty$ such that
\begin{align*}
&\|z_k\|_{L^\infty(Q^T)} \le C_3\Big(\|z_k(T)\|_{L^\infty(\Omega)} + \|u_k - u\|_{L^2(\omega \times (T,\infty))} + \|u_k - u\|_{L^p(T,\infty;L^2(\omega))}\Big)\\
& \le C_3\Big(\|z_k(T)\|_{L^\infty(\Omega)} + \|h\|_{L^2(T;\infty)} + \|h\|_{L^p(T,\infty)}\Big) \quad \forall T < \infty.
\end{align*}
Given $\varepsilon > 0$ we select $T_\varepsilon < \infty$ such that
\[
C_3\Big(\|h\|_{L^2(T_\varepsilon;\infty)} + \|h\|_{L^p(T_\varepsilon,\infty)}\Big) < \frac{\varepsilon}{2}.
\]
Now, we select an integer $k_\varepsilon$ such that $\|z_k\|_{C(\bar Q_{T_\varepsilon})} < \frac{\varepsilon}{2\max\{1,C_3\}}$. Then, we have
\[
\|z_k\|_{L^\infty(Q^{T_\varepsilon})} \le C_3\Big(\|z_k(T_\varepsilon)\|_{L^\infty(\Omega)} + \|h\|_{L^2(T_\varepsilon;\infty)} + \|h\|_{L^p(T_\varepsilon,\infty)}\Big) < \varepsilon\quad \forall k \ge k_\varepsilon.
\]
This proves the desired convergence.
\end{proof}
We denote $G:L^2(Q_\omega) \cap L^p(0,\infty;L^2(\omega)) \longrightarrow W(0,\infty) \cap L^\infty(Q)$ the mapping associating to every control the corresponding state: $G(u) = y_u$. According to Theorem \ref{T2.1}, this mapping is well defined and from \cite{Casas-Kunisch2023C} we infer that it is of class $C^2$ and $z_{u,v} = G'(u)v$ and $z_{u,(v_1,v_2)} = G''(u)(v_1,v_2)$ are the solutions of the following equations
\begin{align}
&\left\{\begin{array}{l}\displaystyle \frac{\partial z}{\partial t} + Az + f'(y_u)z = v\chi_\omega \text{ in } Q,\\
\partial_{n_A}z = 0 \ \text{ on } \Sigma,\ \ z(0) = 0 \ \text{ in } \Omega,\end{array}\right.\label{E2.7}\\
&\left\{\begin{array}{l}\displaystyle \frac{\partial z}{\partial t} + Az + f'(y_u)z = -f''(y_u)z_{u,v_1}z_{u,v_2} \text{ in } Q,\\
\partial_{n_A}z = 0 \ \text{ on } \Sigma,\ \ z(0) = 0 \ \text{ in } \Omega.\end{array}\right.\label{E2.8}
\end{align}

Let us observe that \eqref{E2.7} has a unique solution $z_{u,v} \in W(0,\infty)$ for every $v \in L^2(Q)$ and there exists a constant $M_3$ such that
\begin{equation}
\|z_{u,v}\|_{L^2(0,\infty;H^1(\Omega))} + \|z_{u,v}\|_{L^\infty(0,\infty;L^2(\Omega))} \le M_3\|v\|_{L^2(Q_\omega)};
\label{E2.9}
\end{equation}see \cite[Theorem A.3]{Casas-Kunisch2023C}.

Using this and the chain rule we deduce that $J:L^2(Q_\omega) \cap L^p(0,\infty;L^2(\omega)) \longrightarrow \mathbb{R}$ is also of class $C^2$ and
\begin{align}
&J'(u)v = \int_{Q_\omega}\varphi_uv \dx\dt,\label{E2.10}\\
&J''(u)(v_1,v_2) = \int_Q[1 - \varphi_uf''(y_u)]z_{u,v_1}z_{u,v_2}\dx\dt,\label{E2.11}
\end{align}
where $\varphi_u \in W(0,\infty) \cap L^\infty(Q)$ is the adjoint state, solution of the equation
\begin{equation}
\left\{\begin{array}{l}\displaystyle -\frac{\partial\varphi}{\partial t} + A^*\varphi + f'(y_u)\varphi = y_u - y_d \text{  in } Q,\\\displaystyle
\partial_{n_{A^*}}\varphi = 0 \ \text{ on } \Sigma,\ \ \lim_{T \to \infty}\|\varphi(T)\|_{L^2(\Omega)} = 0.\end{array}\right.\label{E2.12}
\end{equation}
The reader is referred to \cite[Theorem A.4]{Casas-Kunisch2023C} for the existence, uniqueness, and regularity of $\varphi_u$. In addition, combining \cite[Theorem A.4]{Casas-Kunisch2023C} with \eqref{E2.6} we obtain the existence of a constant $K^j$ such that
\begin{equation}
\|\varphi_u\|_{W(0,\infty)} + \|\varphi_u\|_{L^\infty(Q)} \le K^j \quad \forall u \in \uad^j.
\label{E2.13}
\end{equation}

We note that the linear and bilinear forms $J'(u):L^2(Q_\omega) \cap L^p(0,\infty;L^2(\omega)) \longrightarrow \mathbb{R}$ and $J''(u):[L^2(Q_\omega) \cap L^p(0,\infty;L^2(\omega))]^2 \longrightarrow \mathbb{R}$ can be extended to continuous forms $J'(u):L^2(Q_\omega) \longrightarrow \mathbb{R}$ and $J''(u):L^2(Q_\omega)^2 \longrightarrow \mathbb{R}$ by the same formulas \eqref{E2.10} and \eqref{E2.11}.

\subsection{First-order necessary optimality conditions}
Using Theorem \ref{T2.1}, the proof of the existence of a solution for \Pb follows easily by the classical method of calculus of variations. Since \Pb is not a convex problem, we deal also with local solutions. In particular, we will use the following notion of local solution.

\begin{definition}\label{D2.7}
Given $\bar u \in \uad^j$, $j\in \{1,2\}$, with associated state $\bar y$, we say that $\bar u$ is a strong local solution of \Pb if there exists $\varepsilon > 0$ such that $J(\bar u) \le J(u)$ for every $u  \in \uad^j$ such that $\|y_u - \bar y\|_{L^\infty(Q)} \le \varepsilon$.
\end{definition}
In the sequel, to avoid repetition, we must intend 'strong local solution' whenever we say 'local solution'. As a consequence of \eqref{E2.10}  and the work in \cite[Theorem 2.4]{Casas-Kunisch2023C}, and see also \cite[Theorem 5.1]{zbMATH07563683}, we have that for every local solution $\bar u$, there exist $\bar \varphi\in W(0,\infty)\cap L^\infty(Q)$ such the following optimality system holds
\begin{align}
&\left\{\begin{array}{l}\displaystyle \frac{\partial\bar y}{\partial t} + A\bar y + f(\bar y) = g + \bar u\chi_\omega\ \text{ in } Q,\\
\partial_{n_A}\bar y = 0 \ \text{ on } \Sigma,\ \ \bar y(0) = y_0 \ \text{ in } \Omega,\end{array}\right.\label{E2.14}\\
&\left\{\begin{array}{l}\displaystyle -\frac{\partial\bar\varphi}{\partial t} + A^*\bar\varphi + f'(\bar y)\bar\varphi = \bar y - y_d \text{  in } Q,\\\displaystyle
\partial_{n_{A^*}}\bar\varphi = 0 \ \text{ on } \Sigma,\ \ \lim_{T \to \infty}\|\bar\varphi(T)\|_{L^2(\Omega)} = 0,\end{array}\right.\label{E2.15}\\
&\int_{Q_\omega}\bar\varphi(u - \bar u)\dx\dt \ge 0\quad \forall u \in \uad^j.\label{E2.16}
\end{align}

For the constraint \eqref{E1.2}, we have the following result resembling \cite[Corollary 2.1]{Casas-Kunisch2023C}.
\begin{corollary}
Consider the constraint \eqref{E1.2}. Then, the following properties hold for almost all $t\in (0,\infty)$
\begin{align}
    &\int_\omega \bar \varphi(x,t)(v(x)-\bar u(x,t))\,dx \geq 0 \quad \forall v \in B_{\gamma(t)},\label{E2.17}\\
    &\text{if } \| \bar u(t)\|_{L^2(\omega)} < \gamma(t) \implies \bar \varphi(\cdot, t)=0 \text{ in } \omega,\label{E2.18}\\
    &\text{if } \|\bar \varphi(t)\|_{L^2(\omega)} > 0 \implies \bar u(x,t)=-\gamma(t) \frac{\bar \varphi(x,t)}{\| \bar \varphi(t)\|_{L^2(\omega)}}\label{E2.19}.
\end{align}
\end{corollary}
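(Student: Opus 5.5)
The plan is to localize the variational inequality \eqref{E2.16} in time and then solve the resulting pointwise-in-time problem on the ball $B_{\gamma(t)}$ explicitly.

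\textbf{Step 1: proof of \eqref{E2.17}.} Since $\uad^1$ is defined by a constraint that is pointwise in $t$, we use needle-type variations in time. Fix a countable dense subset $\{v_m\}_{m\ge1}$ of the unit ball of $L^2(\omega)$. Given a measurable set $E\subset(0,\infty)$ and an index $m$, define
\[
u = \gamma(t)v_m\ \text{ for } t\in E,\qquad u=\bar u \ \text{ otherwise}.
\]
Because $\gamma\in L^1(0,\infty)\cap L^\infty(0,\infty)$, this $u$ belongs to $L^2(Q_\omega)$, and $\|u(t)\|_{L^2(\omega)}\le\gamma(t)$ a.e., so $u\in\uad^1$. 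Inserting $u$ into \eqref{E2.16} gives
\[
\int_E\int_\omega\bar\varphi(\gamma v_m-\bar u)\dx\dt\ge0 .
\]
The inner integral is an $L^1(0,\infty)$ function of $t$, because $\bar\varphi\in L^\infty(Q)$ and $\gamma,\bar u\in L^2$. Since $E$ is arbitrary, the inner integral is nonnegative for all $t$ outside a null set $N_m$. Take the union $N=\bigcup_m N_m$, which is still null. For $t\notin N$, the inequality holds for every $v_m$, and by density and continuity in $v$ it holds for every $v$ in the unit ball. Scaling by $\gamma(t)$ then gives \eqref{E2.17}. We may also include in $N$ the null set where $\bar u(t)\notin B_{\gamma(t)}$.

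\textbf{Step 2: proof of \eqref{E2.18} and \eqref{E2.19}.} For a.a. $t$, \eqref{E2.17} says that $\bar u(t)$ minimizes the linear functional $v\mapsto(\bar\varphi(t),v)_{L^2(\omega)}$ over $B_{\gamma(t)}$.

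If $\|\bar u(t)\|_{L^2(\omega)}<\gamma(t)$, then $\bar u(t)$ is an interior point. Taking $v=\bar u(t)\pm\varepsilon w$ for small $\varepsilon>0$ and any $w$ yields $(\bar\varphi(t),w)=0$ for all $w$, so $\bar\varphi(t)=0$ in $\omega$. This is \eqref{E2.18}.

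If $\|\bar\varphi(t)\|_{L^2(\omega)}>0$, then by Cauchy--Schwarz the minimum of the functional over $B_{\gamma(t)}$ equals $-\gamma(t)\|\bar\varphi(t)\|_{L^2(\omega)}$. It is attained only at $v=-\gamma(t)\bar\varphi(t)/\|\bar\varphi(t)\|_{L^2(\omega)}$, since equality in Cauchy--Schwarz with $\|v\|=\gamma(t)$ forces $v$ to be a negative multiple of $\bar\varphi(t)$. Uniqueness of the minimizer gives \eqref{E2.19}.

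\textbf{Main obstacle.} The only delicate point is Step 1: the null set in the time localization must be independent of $v$. The countable dense family $\{v_m\}$ together with the continuity argument handles this. We also need that the needle variations stay in $L^2(Q_\omega)$, which follows from $\gamma\in L^2(0,\infty)$.
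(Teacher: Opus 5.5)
Your proposal is correct and follows the paper's route. You obtain \eqref{E2.18} by the interior variation $\bar u(t)\pm\varepsilon w$ and \eqref{E2.19} by the equality case in Cauchy--Schwarz, and your ``unique minimizer'' formulation is cleaner than the paper's bookkeeping with the constant $c_t$. For \eqref{E2.17} the paper only cites an external result, and your needle variations in time with a countable dense subset of the unit ball of $L^2(\omega)$ supply exactly that localization.

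One small fix: on the unbounded interval $(0,\infty)$, $\bar\varphi\in L^\infty(Q)$ and $\bar u\in L^2$ do not by themselves make $t\mapsto\int_\omega\bar\varphi(\gamma v_m-\bar u)\dx$ integrable. The bound $2|\omega|^{1/2}\|\bar\varphi\|_{L^\infty(Q)}\gamma(t)$ together with $\gamma\in L^1(0,\infty)$ does, and so does simply using $\bar\varphi\in W(0,\infty)\subset L^2(Q)$.
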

\begin{proof}
    The inequality \eqref{E2.17} follows from \eqref{E2.16}; see \cite[Theorem 3.3]{Casas-Chrysafinos2026}.
    To prove \eqref{E2.18}, observe that if $\|\bar{u}(t)\|_{L^2(\omega)} < \gamma(t)$, then $\bar{u}(t)$ lies in the interior of $K_{ad}^1(t)$. Choosing test functions $v = \bar{u}(t) \pm \varepsilon \psi$ for arbitrary $\psi \in L^2(\omega)$ and sufficiently small $\varepsilon > 0$ in \eqref{E2.17} yields $\int_\omega \bar{\varphi}(t) \psi \dx = 0$, implying $\bar{\varphi}(t) = 0$.
    To prove \eqref{E2.19}, we observe that the assumption  $\|\bar\varphi(t)\|_{L^2(\omega)} \neq 0$ and \eqref{E2.18} imply that $\|\bar u(t)\|_{L^2(\omega)} = \gamma(t)$. From \eqref{E2.17} we have
 \begin{align*}
 &\|\bar u(t)\|_{L^2(\omega)}\|\bar\varphi(t)\|_{L^2(\omega)} = \gamma(t)\|\bar\varphi(t)\|_{L^2(\omega)} = \sup_{\|v\|_{L^2(\omega)} \le \gamma(t)}-\int_\omega\bar\varphi(t)v\dx\\
 & \le -\int_\omega\bar\varphi(t)\bar u(t)\dx \le \|\bar\varphi(t)\|_{L^2(\omega)}\|\bar u(t)\|_{L^2(\omega)}.
 \end{align*}
 The equality $ -\int_\omega\bar\varphi(t)\bar u(t)\dx = \|\bar\varphi(t)\|_{L^2(\omega)}\|\bar u(t)\|_{L^2(\omega)}$ implies the existence of a constant $c_t < 0$ such that $\bar u(t) = |c_t|\bar\varphi(t)$. Taking norms we get $\gamma(t) = \|\bar u(t)\|_{L^2(\omega)} = c_t\|\bar\varphi(t)\|_{L^2(\omega)}$, what yields $c_t = -\frac{\gamma(t)}{\|\bar\varphi(t)\|_{L^2(\omega)}}$. Then, \eqref{E2.19} follows.
\end{proof}

\section{Second order sufficient optimality conditions}
\label{S3}
\setcounter{equation}{0}
In the rest of this section, $\bar u$ denotes a control of $\uad^j$ satisfying the first order optimality conditions \eqref{E2.14}--\eqref{E2.16} along with the associated state $\bar y$ and adjoint state $\bar\varphi$. We split the section into two parts analyzing the two cases $\uad^j$, $j = 1, 2$, separately.

\subsection{Box-constraints}
For $\tau > 0$, following \cite{Casas-Mateos2020} we define $C^{\tau}_{\bar u} = G^\tau_{\bar u}\cap E^{\tau}_{\bar u }$, where
\begin{align*}
C_{\bar u}&:= \{ v\in L^2(Q_\omega)\vert \  v(x,t) \geq 0 \text{ if } \bar u(x,t)=\alpha,\, v(x,t)\leq 0 \text{ if } \bar u(x,t)=\beta \text{ a.e. in } Q_\omega\},\\
E^\tau_{\bar u}&:= \{v \in L^2(Q_\omega) \vert\ v \in C_{\bar u},\ v(x,t) = 0 \text{ if } |\bar\varphi(x,t)| > \tau \text{ a.e.~in } Q_\omega\},\\
G^\tau_{\bar u}&:=\{v\in L^2(Q_\omega) \vert \ v \in C_{\bar u} \text{ and } J'(\bar u)v\leq \tau \|z_{\bar u,v}\|_{L^1(Q)}\}.
\end{align*}
When $z_{\bar u,v}\notin L^1(Q)$, we have $\|z_{\bar u,v}\|_{L^1(Q)}=\infty$ and the cone $G^\tau_{\bar u}$ does not restrict the direction $v$, nevertheless, for the case $z_{\bar u,v}\in L^1(Q)$, $G^\tau_{\bar u}$ is meaningful. By convention, if $v \in C_{\bar u}$ and $\|z_{\bar u,v}\|_{L^1(Q)}= \infty$, we set $v\in G^\tau _{\bar u}$.

The second-order sufficient optimality condition for \Pb is formulated as follows
\begin{equation}
\exists \tau > 0 \text{ and } \delta > 0 \text{ such that } J''(\bar u)v^2 \ge \delta\|z_{\bar u,v}\|^2_{L^2(Q)}\quad \forall v \in C^\tau_{\bar u},
\label{E3.1}
\end{equation}
where $z_{\bar u,v} = G'(\bar u)v$.

It is well known that a local solution $\bar u$ of \Pb satisfies the second-order necessary condition $J''(\bar u)v^2 \ge 0$ for all $v \in C^0_{\bar u}$; see, for instance, \cite{Casas-Troltzsch2012A}.

\begin{theorem}\label{T3.2}
Under the above assumption \eqref{E3.1}, there exist $\kappa > 0$ and $\varepsilon > 0$ such that
\begin{equation}
J(\bar u) + \frac{\kappa}{2}\|y_u - \bar y\|^2_{L^2(Q)} \le J(u)\quad \forall u \in \uad ^2  \text{ such that } \|y_u - \bar y\|_{L^\infty(Q)} \le \varepsilon.
\label{E3.2}
\end{equation}
\end{theorem}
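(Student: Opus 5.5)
We argue by contradiction, following the scheme of \cite{Casas-Mateos2020} and \cite{Casas-Chrysafinos2026} and using Theorem \ref{T2.1} to handle the unbounded time interval. Suppose \eqref{E3.2} fails for every $\kappa,\varepsilon>0$. Then there is a sequence $\{u_k\}\subset\uad^2$ with
\[
\|y_{u_k}-\bar y\|_{L^\infty(Q)}\le \frac1k,\qquad J(u_k)<J(\bar u)+\frac1{2k}\|y_{u_k}-\bar y\|^2_{L^2(Q)} .
\]
In particular $y_{u_k}\ne\bar y$, hence $u_k\ne\bar u$. Set $\rho_k=\|y_{u_k}-\bar y\|_{L^2(Q)}$; by \eqref{E2.6} it is finite, and $\rho_k\to0$ because $y_{u_k}\to\bar y$ in $L^\infty(Q)$ and is bounded in $W(0,\infty)$. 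The normalization is by the state, not the control, since there is no Tikhonov term. We put $v_k=(u_k-\bar u)/\rho_k$ and $z_k=z_{\bar u,v_k}$. The first key step is to show that $\|z_k\|_{L^2(Q)}\to1$, that is, $\|y_{u_k}-\bar y-z_{\bar u,u_k-\bar u}\|_{L^2(Q)}=o(\rho_k)$. The difference $w_k=y_{u_k}-\bar y-z_{\bar u,u_k-\bar u}$ solves a linear equation with right-hand side $-[f'(\hat y_k)-f'(\bar y)](y_{u_k}-\bar y)$. Its $L^2(Q)$ norm is bounded by $C\|y_{u_k}-\bar y\|_{L^\infty(Q)}\,\rho_k=o(\rho_k)$, and \cite[Theorem A.3]{Casas-Kunisch2023C} then gives $\|w_k\|_{L^2(0,\infty;H^1)}=o(\rho_k)$.

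The second step is to take limits. The sequence $\{z_k\}$ is bounded in $L^2(Q)$. The equation for $z_k$ together with \eqref{E2.9}-type estimates (now in terms of $z_k$) gives boundedness in $W(0,T)$ for every $T$, after we pass through the identity $z_k=(y_{u_k}-\bar y-w_k)/\rho_k$. We extract $z_k\rightharpoonup z$ in $L^2(Q)$ and strongly in $L^2(Q_T)$ for each $T$. We want to identify $z=z_{\bar u,v}$ for some $v$ in the cone. The difficulty is that $v_k$ need not be bounded in $L^2(Q_\omega)$, since we normalize by the state. To get around this, I would exploit the structure of the box constraints. By \eqref{E2.16}, $\bar\varphi(u_k-\bar u)\ge0$ pointwise, and the functional inequality below gives
\[
\int_{Q_\omega}\bar\varphi v_k=J'(\bar u)v_k\le C\rho_k\to0 .
\]
This controls $v_k$ on the set $\{|\bar\varphi|>\tau\}$ in $L^1$. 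Instead of identifying $v$, it suffices to work with the limit state $z$ and a second-order expansion.

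The functional inequality comes from a Taylor expansion. We write
\[
J(u_k)-J(\bar u)=\int_Q(\bar y-y_d)(y_{u_k}-\bar y)+\tfrac12\rho_k^2 .
\]
Testing the adjoint equation \eqref{E2.15} against $y_{u_k}-\bar y$ yields
\[
J(u_k)-J(\bar u)=J'(\bar u)(u_k-\bar u)+\tfrac12\int_Q\big[1-\bar\varphi f''(\bar y)\big](y_{u_k}-\bar y)^2+o(\rho_k^2).
\]
This is exact up to a remainder controlled by $\|\bar\varphi\|_\infty\|y_{u_k}-\bar y\|_\infty\rho_k^2$. The assumed inequality then gives
\[
J'(\bar u)(u_k-\bar u)+\tfrac12\int_Q\big[1-\bar\varphi f''(\bar y)\big](y_{u_k}-\bar y)^2\le o(\rho_k^2).
\]
Since the quadratic term is $O(\rho_k^2)$, we get $0\le J'(\bar u)v_k\le C\rho_k$. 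Dividing by $\rho_k^2$ gives
\[
\frac{J'(\bar u)(u_k-\bar u)}{\rho_k^2}+\tfrac12\int_Q\big[1-\bar\varphi f''(\bar y)\big]\Big(\frac{y_{u_k}-\bar y}{\rho_k}\Big)^2\le o(1).
\]

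The main obstacle is applying \eqref{E3.1}, because $v_k$ is not in $C^\tau_{\bar u}$. I would split $u_k-\bar u=v_k^1+v_k^2$, with $v_k^1=(u_k-\bar u)\chi_{\{|\bar\varphi|\le\tau\}}\in E^\tau_{\bar u}$. For the rest, $J'(\bar u)v_k^2\ge\tau\|v_k^2\|_{L^1(Q_\omega)}$. If $v_k^1\notin G^\tau_{\bar u}$, then $J'(\bar u)v_k^1>\tau\|z_{\bar u,v_k^1}\|_{L^1}$, which must be combined with the first term; otherwise \eqref{E3.1} applies to $v_k^1$. In both cases I would bound $\|z_{\bar u,v^2_k}\|_{L^2(Q)}^2$ by $C\|v_k^2\|_{L^1}\|v_k^2\|_{L^\infty(0,\infty;L^2)}$, using an $L^1\to L^\infty$-type estimate for $z$ and the $L^\infty$ bounds on $\alpha,\beta$. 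This shows the $v^2_k$ part is dominated by $J'(\bar u)v_k^2$. The first case similarly uses $\|z\|_{L^2}^2\le\|z\|_{L^\infty}\|z\|_{L^1}$ with $\|z_{\bar u,u_k-\bar u}\|_{L^\infty}\to0$ by Theorem \ref{T2.1}. Combining the pieces gives $\min(\delta,1)\rho_k^2\lesssim o(\rho_k^2)$ with a fixed constant, which contradicts $\|z_k\|_{L^2(Q)}\to1$. This case analysis, and in particular the bound on the $L^1$–$L^\infty$ interaction uniformly over the infinite horizon, is where I expect most of the work; \eqref{E2.9} and the decay of $h$ in the proof of Theorem \ref{T2.1} should handle the tail $t\to\infty$.
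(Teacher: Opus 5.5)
Your contradiction argument is a genuinely different route from the paper's direct case analysis, and much of it is sound. Three pieces work:
the exact identity $J(u_k)-J(\bar u)=J'(\bar u)(u_k-\bar u)+\frac12\int_Q[1-\bar\varphi f''(\hat y_k)](y_{u_k}-\bar y)^2\dx\dt$;
the replacement of $y_{u_k}-\bar y$ by $z_{\bar u,u_k-\bar u}$ up to $o(\rho_k)$ via \eqref{EA.2};
and the resulting bound $\tau\|v_k^2\|_{L^1(Q_\omega)}\le J'(\bar u)(u_k-\bar u)\le C\rho_k^2$.
That last bound gives $L^1$-smallness of the part of $u_k-\bar u$ on $\{|\bar\varphi|>\tau\}$ without Lemma \ref{LA.5} or the paper's duality argument. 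Note that $\rho_k\to0$ needs Lemma \ref{LA.3}, because $L^\infty$ convergence plus boundedness in $W(0,\infty)$ does not control the tail in time.

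The gap is in the step you yourself single out. Your bound $\|z_{\bar u,v_k^2}\|^2_{L^2(Q)}\le C\|v_k^2\|_{L^1(Q_\omega)}\|v_k^2\|_{L^\infty(0,\infty;L^2(\omega))}$ only gives $\|z_{\bar u,v_k^2}\|^2_{L^2(Q)}=O(\rho_k^2)$ and $\|z_{\bar u,v_k^2}\|_{L^\infty(Q)}=O(1)$. The box constraints bound $\|v_k^2\|_{L^\infty(0,\infty;L^2(\omega))}$, but nothing makes it small: $v_k^2$ can concentrate on short time intervals while its $L^1$ norm vanishes. With only $O(\rho_k^2)$ you can neither discard $J''(\bar u)(v_k^1,v_k^2)$ and $J''(\bar u)(v_k^2)^2$ nor dominate them by $J'(\bar u)v_k^2\ge\tau\|v_k^2\|_{L^1(Q_\omega)}$. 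For the fixed $\tau$ of \eqref{E3.1} there is no reason for $\tau$ to beat $M_J$ times your constant. The same defect breaks the subcase $v_k^1\notin G^\tau_{\bar u}$. There you need $\|z_{\bar u,v_k^1}\|_{L^\infty(Q)}\to0$, but $z_{\bar u,v_k^1}=z_{\bar u,u_k-\bar u}-z_{\bar u,v_k^2}$, and only the first term is known to vanish (by \eqref{EA.6}).

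The missing ingredient is an $L^\infty(Q)$ estimate for $z_{\bar u,w}$ in terms of time-integrable norms of $w$, interpolated against the pointwise bound $|w|\le\|\beta-\alpha\|_{L^\infty(Q_\omega)}$. This makes $\|z_{\bar u,w}\|_{L^\infty(Q)}$ controlled by a positive power of $\|w\|_{L^1(Q_\omega)}$. The paper uses
\[
\|z_{\bar u,w}\|_{L^\infty(Q)}\le C_2\big(\|w\|_{L^3(Q_\omega)}+\|w\|_{L^2(Q_\omega)}\big)\le C_3\big(\|w\|^{\frac13}_{L^1(Q_\omega)}+\|w\|^{\frac12}_{L^1(Q_\omega)}\big)
\]
together with Lemma \ref{LA.8}. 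This gives $\|z_{\bar u,w}\|^2_{L^2(Q)}\le C_4(\|y_u-\bar y\|^{1/3}_{L^\infty(Q)}+\|y_u-\bar y\|^{1/2}_{L^\infty(Q)})\|w\|_{L^1(Q_\omega)}$, i.e.\ a \emph{small} multiple of $\|w\|_{L^1(Q_\omega)}$. A linear analogue of \eqref{E2.5}, combined with $\|w\|_{L^p(0,\infty;L^2(\omega))}\le\|w\|^{(p-2)/p}_{L^\infty(0,\infty;L^2(\omega))}\|w\|^{2/p}_{L^2(Q_\omega)}$, would serve equally well.

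Fed with your bound $\|v_k^2\|_{L^1(Q_\omega)}\le C\rho_k^2/\tau\to0$, this yields $\|z_{\bar u,v_k^2}\|_{L^\infty(Q)}\to0$. Hence $\|z_{\bar u,v_k^2}\|^2_{L^2(Q)}=o(\rho_k^2)$ and $\|z_{\bar u,v_k^1}\|_{L^2(Q)}=\rho_k(1+o(1))$, and both subcases close:
if $v_k^1\in G^\tau_{\bar u}$, then \eqref{E3.1} gives $J(u_k)-J(\bar u)\ge\frac{\delta}{2}\rho_k^2+o(\rho_k^2)$;
otherwise $J'(\bar u)v_k^1>\tau\|z_{\bar u,v_k^1}\|^2_{L^2(Q)}/\|z_{\bar u,v_k^1}\|_{L^\infty(Q)}$ is of larger order than $\rho_k^2$, contradicting $J'(\bar u)(u_k-\bar u)\le C\rho_k^2$.

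Once repaired, your route yields only a non-explicit $\kappa$, whereas the paper obtains $\kappa=\delta/32$. Your compactness paragraph can simply be dropped. It is also unjustified, since nothing bounds $(y_{u_k}-\bar y)/\rho_k$ in $W(0,T)$.
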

\begin{proof}
Let $u \in \uad^2$ satisfy $\|y_u - \bar y\|_{L^\infty(Q)} \le \varepsilon$, where $\varepsilon$ will  be determined later independently of $u$.
From Lemma \ref{LA.6} we infer the existence of $\varepsilon_\rho > 0$ such that \eqref{EA.10} holds with $\rho = \frac{\delta}{2}$ for $\|y_u - \bar y\|_{L^\infty(Q)} \le \varepsilon_\rho$. Using Lemma \ref{LA.7} we deduce the existence of $\varepsilon_0 \le \varepsilon_\rho$  such that $\|y_{\bar u + \theta(u - \bar u)} - \bar y\|_{L^\infty(Q)} \le \varepsilon_\rho$ for every $\theta \in (0,1)$ and for all $u \in \uad^2$ with $\|y_u - \bar y\|_{L^\infty(Q)} \le \varepsilon_0$. Hence, we have
\begin{equation}
|[J''(\bar u+\theta(u-\bar u))-J''(\bar u)]v^2| \le \frac{\delta}{2} \quad \forall v \in L^2(Q_\omega).
\label{E3.3}
\end{equation}

Let us consider three different possible cases for $u$.\vspace{2mm}

{\em Case I: $u - \bar u \in  C^{\tau}_{\bar u}$.} Performing a Taylor expansion and using \eqref{E3.1} and \eqref{EA.10} we get for $\|y_u - \bar y\|_{L^\infty(Q)} < \varepsilon_0$ 
\begin{align*}
    J(u)&=
  J(\bar u) +J'(\bar u)(u-\bar u)+\frac{1}{2}J''(\bar u)(u-\bar u)^2\\
  &+\frac{1}{2}[J''(\bar u+\theta(u-\bar u))-J''(\bar u)](u-\bar u)^2\geq J(\bar u) +\frac{\delta}{4}\|z_{\bar u,u-\bar u}\|_{L^2(Q)}^2
\end{align*}

{\em Case II: $u - \bar u\notin G^{\tau}_{\bar u}$.} This implies
\begin{align}
      J(u)&>
  J(\bar u) +\tau \| z_{\bar u,u-\bar u}\|_{L^1(Q)}+\frac{1}{2}J''(\bar u)(u-\bar u)^2\notag\\
  &+\frac{1}{2}[J''(\bar u+\theta(u-\bar u))-J''(\bar u)](u-\bar u)^2.\label{E3.4}
\end{align}
Using \eqref{E3.3} and \eqref{EA.9} we obtain for $\|y_u - \bar y\|_{L^\infty(Q)} \le \varepsilon_0$
\[
    \frac{1}{2}|J''(\bar u)(u-\bar u)^2|+\frac{1}{2}|[J''(\bar u+\theta(u-\bar u))-J''(\bar u)](u-\bar u)^2| \leq \Big(\frac{M_J}{2}+\frac{\delta}{4}\Big)\|z_{\bar u,u-\bar u}\|_{L^2(Q)}^2.
\]
From \eqref{EA.3} we that $\|z_{\bar u,u - \bar u}\|_{L^\infty(Q)} \le (1 + K_{ad,\infty})\|y_u - \bar y\|_{L^\infty(Q)}$. Let us select $\varepsilon_1 = \min\{\varepsilon_0,\frac{2\tau}{(1 + K_{ad,\infty})(M_J + \delta)}\}$. This yields 
\[
\|z_{\bar u, u-\bar u}\|_{L^2(Q)}^2\leq \|z_{\bar u,u-\bar u}\|_{L^\infty(Q)} \|z_{\bar u, u-\bar u}\|_{L^1(Q)} \leq \frac{2\tau}{M_J + \delta}\|z_{\bar u, u-\bar u}\|_{L^1(Q)}
\]
for $\|y_u - \bar y\|_{L^\infty(Q)} \le \varepsilon_1$. Inserting the above two estimates in \eqref{E3.4} we obtain
\begin{align*}
J(u) &\ge  J(\bar u) + \frac{M_J + \delta}{2}\|z_{\bar u,u-\bar u}\|_{L^2(Q)}^2 - \Big(\frac{M_J}{2} + \frac{\delta}{4}\Big)\|z_{\bar u,u-\bar u}\|_{L^2(Q)}^2\\
& = J(\bar u) + \frac{\delta }{4}\|z_{\bar u,u-\bar u}\|_{L^2(Q)}^2.
\end{align*}

{\em Case III: $u - \bar u \in G^\tau_{u - \bar u} \setminus E^\tau_{\bar u}$.} We define
\[
v(x,t) = \left\{\begin{array}{cl}u(x,t) - \bar u(x,t)&\text{if } |\bar\varphi(x,t)| \le \tau,\\0&\text{if } |\bar\varphi(x,t)| > \tau,\end{array}\right.
\]
and $w = (u - \bar u) - v$. It is obvious that $v \in E^\tau_{\bar u}$. 
Moreover, from Lemma \ref{LA.5} we infer the existence of $T_\tau \in (0,\infty)$ independent of $u$ such that $\|\bar\varphi(t)\|_{L^\infty(\Omega)} < \tau$ for every $t > T_\tau$. This implies that $w(x,t) = 0$ for $t > T_\tau$ and $w \in L^1(Q_\omega) \cap L^\infty(0,\infty;L^2(\omega))$. Performing a second order Taylor expansion we get
\begin{align*}
J(u) &= J(\bar u) + J'(\bar u)(u - \bar u) + \frac{1}{2}J''(\bar u + \theta(u - \bar u))(u - \bar u)^2\\
& \ge 
J(\bar u) +J'(\bar u)v+ \tau\|w\|_{L^1(Q)} + \frac{1}{2}J''(\bar u)v^2  + \frac{1}{2}J''(\bar u)w^2 + J''(\bar u)(v,w)\\
&- \frac{1}{2} |[J''(\bar u + \theta(u - \bar u)) - J''(\bar u)](u - \bar u)^2|.
\end{align*}
Let us estimate $\tau\|w\|_{L^1(Q)}$. From the equations satisfied by $\bar\varphi$ and $z_{\bar u,u - \bar u}$ we infer
\small
\begin{align*}
&\tau\|w\|_{L^1(Q_\omega)} \le \int_0^{T_\tau}\int_\omega\bar\varphi(u - \bar u)\dx\dt = \int_\Omega\bar\varphi(T_\tau)z_{\bar u,u - \bar u}(T_\tau)\dx + \int_{Q_{T_\tau}}(\bar y - y_d)z_{\bar u,u - \bar u}\dx\dt\\
&\le \vert\Omega\vert \|\bar\varphi\|_{L^\infty(Q)}\|z_{\bar u,u - \bar u}\|_{L^\infty(Q_{T_\tau})}+\|\bar y - y_d\|_{L^2(Q)}\|z_{\bar u,u - \bar u}\|_{L^2(Q_{T_\tau})}\\
&\le \Big(\vert\Omega\vert \|\bar\varphi\|_{L^\infty(Q)} + \sqrt{|\Omega|T_\tau}\|\bar y - y_d\|_{L^2(Q)}\Big)\|z_{\bar u,u - \bar u}\|_{L^\infty(Q_{T_\tau})}\\
& \le \Big(\vert\Omega\vert \|\bar\varphi\|_{L^\infty(Q)} + \sqrt{|\Omega|T_\tau}\|\bar y - y_d\|_{L^2(Q)}\Big)(1 + K_{ad,\infty})\|y_u - \bar y\|_{L^\infty(Q)} = C_1\|y_u - \bar y\|_{L^\infty(Q)}.
\end{align*}
\normalsize
On the other side, we have with \cite{Casas-Kunisch2025}
\begin{align*}
\|z_{\bar u,w}\|_{L^\infty(Q)} &\le C_2\big(\|w\|_{L^3(Q_\omega))} + \|w\|_{L^2(Q_\omega)}\big)\\
& \le C_2\big(\|\beta - \alpha\|_{L^\infty(Q_\omega)}^{\frac{2}{3}}\|w\|_{L^1(Q_\omega)}^{\frac{1}{3}} + \|\beta - \alpha\|_{L^\infty(Q_\omega)}^{\frac{1}{2}}\|w\|_{L^1(Q_\omega)}^{\frac{1}{2}}\big)\\
& \le C_3(\|w\|_{L^1(Q_\omega)}^{\frac{1}{3}} +\|w\|_{L^1(Q_\omega)}^{\frac{1}{2}}).
\end{align*}
Using this estimate, the above estimate for $\|w\|_{L^1(Q_\omega)}$, and Lemma \eqref{EA.12} we get
\begin{align*}
&\|z_{\bar u,w}\|^2_{L^2(Q)} \le \|z_{\bar u,w}\|_{L^1(Q)} \|z_{\bar u,w}\|_{L^\infty(Q)} \le KC_3(\|w\|_{L^1(Q_\omega)}^{\frac{1}{3}} +\|w\|_{L^1(Q_\omega)}^{\frac{1}{2}})\|w\|_{L^1(Q_\omega)}\\
& \le KC_3\Big(\Big[\frac{C_1}{\tau}\|y_u - \bar y\|_{L^\infty(Q)}\Big]^{\frac{1}{3}} + \Big[\frac{C_1}{\tau}\|y_u - \bar y\|_{L^\infty(Q)}\Big]^{\frac{1}{2}}\Big)\|w\|_{L^1(Q_\omega)}\\
& \le C_4\Big(\|y_u - \bar y\|^{\frac{1}{3}}_{L^\infty(Q)} + \|y_u - \bar y\|^{\frac{1}{2}}_{L^\infty(Q)}\Big)\|w\|_{L^1(Q_\omega)}.
\end{align*}
Now, we need to consider the cases of $v$ being in $G^\tau_{\bar u}$ or not.\vspace{2mm}

{\em Case III-1: $v\in G^\tau_{\bar u}$.} It follows that $v\in C^\tau_{\bar u}$ and we infer from \eqref{E3.1} that $ J''(\bar u)v^2\geq \delta \| z_{\bar u,v}\|_{L^2(Q)}^2$. We take $\varepsilon_2 \le \varepsilon_1$ sufficiently small such that \eqref{EA.10} holds for $\rho = \frac{\delta}{8}$ if $\|y_u - \bar y\|_{L^\infty(Q)} \le \varepsilon_2$, and additionally
\[
 \frac{\tau }{C_4(\sqrt[3]{\varepsilon_2} + \sqrt{\varepsilon_2})} - \frac{M_J}{2}-\frac{\delta}{2}-\frac{2M_J^2}{\delta} \ge 0.
\]
We get with $\| z_{\bar u,v}\|_{L^2(Q)}^2 \ge \frac{1}{2}\| z_{\bar u,u - \bar u}\|_{L^2(Q)}^2 - \| z_{\bar u,w}\|_{L^2(Q)}^2$ and Young's inequality
\begin{align*}
J(u) & \ge 
J(\bar u) + \tau\|w\|_{L^1(Q_\omega)} +\frac{\delta}{2} \| z_{\bar u,v}\|_{L^2(Q)}^2 \\
&-\frac{M_J}{2}\| z_{\bar u,w}\|_{L^2(Q)}^2 - M_J\| z_{\bar u,v}\|_{L^2(Q)}\| z_{\bar u,w}\|_{L^2(Q)}-\frac{\delta}{16}\| z_{\bar u,u-\bar u}\|_{L^2(Q)}^2\\
&\geq J(\bar u) +\left(\frac{\tau }{C_4(\sqrt[3]{\varepsilon_2} + \sqrt{\varepsilon_2})}-\frac{M_J}{2}-\frac{\delta}{2} -\frac{2M_J^2}{\delta}\right)\| z_{\bar u,w}\|_{L^2(Q)}^2+ \frac{\delta}{8}\|z_{\bar u, u - \bar u}\|_{L^2(Q)}^2\\
&- \frac{\delta}{16}\| z_{\bar u,u-\bar u}\|_{L^2(Q)}^2 \ge J(\bar u) + \frac{\delta}{16}\|z_{\bar u,u-\bar u}\|_{L^2(Q)}^2
\end{align*}

{\em Case III-2: $v\notin G^\tau_{\bar u}$.} First, we estimate $\|z_{\bar u,v}\|_{L^\infty(Q)}$. To this end we use the estimates obtained for $z_{\bar u,u - \bar u}$ and $z_{\bar u,w}$ and take $\|y_u - \bar y\|_{L^\infty(Q)} \le \varepsilon$ with $\varepsilon \le \min\{1,\varepsilon_2\}$ to be fixed later:
\begin{align*}
&\|z_{\bar u,v}\|_{L^\infty(Q)} \le \|z_{\bar u,u - \bar u}\|_{L^\infty(Q)} + \|z_{\bar u,w}\|_{L^\infty(Q)}\\
&\le (1 + K_{ad,\infty})\|y_u - \bar y\|_{L^\infty(Q)} + C_3(\|w\|_{L^1(Q_\omega)}^{\frac{1}{3}} +\|w\|_{L^1(Q_\omega)}^{\frac{1}{2}})\\
&\le  (1 + K_{ad,\infty})\varepsilon + C_3\Big(\big[\frac{C_1}{\tau}\varepsilon]^{\frac{1}{3}} + [\frac{C_1}{\tau}\varepsilon\big]^{\frac{1}{2}}\Big)\\
& \le \Big(1 + K_{ad,\infty} + C_3\Big(\big[\frac{C_1}{\tau}\big]^{\frac{1}{3}} + \big[\frac{C_1}{\tau}\big]^{\frac{1}{2}}\Big)\Big)\sqrt[3]{\varepsilon} = C_5\sqrt[3]{\varepsilon}.
\end{align*}
This estimate and the fact that $v\notin G^\tau_{\bar u}$ imply
\[
J'(\bar u)v > \tau\|z_{\bar u,v}\|_{L^1(Q)} \ge \frac{\tau}{C_5\sqrt[3]{\varepsilon}}\|z_{\bar u,v}\|_{L^2(Q)}^2. 
\]
Since $\|y_u - \bar y\|_{L^\infty(Q)} \le \varepsilon$, arguing as in {\em Case III-1} we have that
\[
J'(\bar u)w \ge \tau\|w\|_{L^1(Q_\omega)} \ge \frac{\tau}{2C_4\sqrt[3]{\varepsilon}}\|z_{\bar u,w}\|_{L^2(Q)}^2.
\]
No we additionally impose to $\varepsilon$ to satisfy $\frac{\tau}{2}(\frac{1}{\max\{2C_4,C_5\}\sqrt[3]{\varepsilon}} - M_J) \ge \frac{\delta}{4}$. Then, we have
\begin{align*}
&J(u) - J(\bar u) \ge \tau(\|w\|_{L^1(Q)}+\|z_{\bar u,v}\|_{L^1(Q)}) \\
&-\frac{M_J}{2}\Big(\| z_{\bar u,v}\|_{L^2(Q)}^2 +\|z_{\bar u, w}\|_{L^2(Q)}^2 \Big)-M_J\| z_{\bar u,v}\|_{L^2(Q)}\| z_{\bar u,w}\|_{L^2(Q)}-\frac{\delta}{16} \| z_{\bar u,u-\bar u}\|_{L^2(Q)}^2\\
&\geq \frac{\tau}{2} \Big[\Big(\frac{1}{2C_4\sqrt[3]{\varepsilon}}-M_J\Big)\|z_{\bar u,w}\|_{L^2(Q)}^2+\Big(\frac{1}{C_5\sqrt[3]{\varepsilon}}-M_J\Big)\|z_{\bar u,v}\|_{L^2(Q)}^2\Big] -\frac{\delta}{16} \|z_{\bar u,u - \bar u }\|_{L^2(Q)}^2\\
&\ge \frac{\delta}{4}\Big(\|z_{\bar u,w}\|_{L^2(Q)}^2 + \|z_{\bar u,v}\|_{L^2(Q)}^2\Big) -\frac{\delta}{16} \|z_{\bar u,u - \bar u }\|_{L^2(Q)}^2 \ge \frac{\delta}{16} \|z_{\bar u,u - \bar u }\|_{L^2(Q)}^2.
\end{align*}
\normalsize

We have proved the existence of $\varepsilon >0$ such that $J(u) \geq J(\bar u) + \frac{\delta}{16} \|z_{\bar u,u-\bar u}\|_{L^2(Q)}^2$ for all $u\in \uad^2$ with $\|y_u - \bar y\|_{L^\infty(Q)} \le \varepsilon$. Taking $\varepsilon \le \frac{1}{M_3M_f}$ and applying \eqref{EA.7} we have $\frac{1}{2}\|y_u - \bar y\|_{L^2(Q)} \le \|z_{\bar u,u-\bar u}\|_{L^2(Q)}$. This yields $J(u) \ge J(\bar u) + \frac{\delta}{64}\|y_u - \bar y\|_{L^2(Q)}^2$ and \eqref{E3.2} is fulfilled with $\kappa = \frac{\delta}{32}$.
\end{proof}

\subsection{Pointwise $L^2$-constraints}
Since the control constraints are nonlinear, we are going to define a Lagrangian function associated with problem \Pb. For this purpose we introduce the Hilbert space $L^2_\gamma(Q_\omega)$ of measurable functions $u:Q_\omega \longrightarrow \mathbb{R}$ such that
\[
\|u\|_{L^2_\gamma(Q_\omega)} = \Big(\int_0^\infty\frac{1}{\gamma(t)}\|u(t)\|^2_{L^2(\omega)}\dt\Big)^{\frac{1}{2}} < \infty.
\]
We observe that $L^2_\gamma(Q_\omega) \subset L^2(Q_\omega)$:
\[
\|u\|_{L^2(Q_\omega)}\! \le\! \Big(\!\int_0^\infty\frac{\|\gamma\|_{L^\infty(0,\infty)}}{\gamma(t)}\|u(t)\|^2_{L^2(\omega)}\dt\!\Big)^{\frac{1}{2}}\! \le\! \|\gamma\|^{\frac{1}{2}}_{L^\infty(0,\infty)}\|u\|_{L^2_\gamma(Q_\omega)}\, \forall u \in L^2_\gamma(Q_\omega).
\]

We also notice that $\uad^1 \subset L^2_\gamma(Q_\omega)$:
\[
\int_0^\infty\frac{1}{\gamma(t)}\|u(t)\|^2_{L^2(\omega)}\dt \le \int_0^\infty\gamma(t)\dt \quad \forall u \in \uad^1.
\]
We define the Lagrangian function $\mathcal{L}:L^p(0,\infty;L^2(\omega)) \cap L^2_\gamma(Q_\omega) \times L^\infty(0,\infty) \longrightarrow \mathbb{R}$
\[
\mathcal{L}(u,\lambda) = J(u) + \frac{1}{2}\int_0^\infty\frac{\lambda(t)}{\gamma(t)}\|u(t)\|^2_{L^2(\omega)}\dt.
\]
$\mathcal{L}$ is of class $C^2$ and using \eqref{E2.10} and \eqref{E2.11} we get
\begin{align}
&\frac{\partial\mathcal{L}}{\partial u}(u,\lambda)v = \int_{Q_\omega}\varphi_uv\dx\dt + \int_0^\infty\frac{\lambda(t)}{\gamma(t)}\int_\omega uv\dx\dt,\label{E3.5}\\
&\frac{\partial^2\mathcal{L}}{\partial u^2}(u,\lambda)(v_1,v_2)\notag\\& = \int_Q(1 - \varphi_uf''(y_u))z_{u,v_1}z_{u,v_2}\dx\dt + \int_0^\infty\frac{\lambda(t)}{\gamma(t)}\int_\omega v_1v_2\dx\dt.\label{E3.6}
\end{align}
We note that the expression \eqref{E3.5} allows the extension of the linear  form $\frac{\partial\mathcal{L}}{\partial u}(u,\lambda)$ $L^2_\gamma(Q_\omega)$ in the case where $u \in \uad^1$ and $\lambda \in L^2(0,\infty)$. It is enough to observe that
\begin{align*}
&\int_0^\infty\frac{\lambda(t)}{\gamma(t)}\int_\omega u(t)v(t)\dx\dt \le \int_0^\infty|\lambda(t)|\|v(t)\|_{L^2(\omega)}\frac{\|u(t)\|_{L^2(\omega)}}{\gamma(t)}\dt\\
& \le \int_0^\infty|\lambda(t)\|v(t)\|_{L^2(\omega)}\dt \le \|\lambda\|_{L^2(0,\infty)}\|v\|_{L^2(Q_\omega)}.
\end{align*}

We define $\bar\lambda:(0,\infty)\to[0,\infty)$ by $\bar\lambda(t) = \|\bar\varphi(t)\|_{L^2(\omega)}$. We observe that $\bar\lambda \in L^2(0,\infty) \cap C[0,\infty)$ and from \eqref{E2.18}  we have
\[
\bar\lambda(t) \ge 0\ \ \text{ and }\ \ \bar\lambda(t)[\|\bar u(t) - \gamma(t)] = \quad \forall t \in (0,\infty).
\]
Let us denote
\[
I_\gamma = \{t \in (0,\infty) : \|\bar u(t)\|_{L^2(\omega)} = \gamma(t)\} \text{ and } I^+_\gamma = \{t \in I_\gamma : \bar\lambda(t) > 0\}.
\]
These properties along with the next proposition proposition show that $\bar\lambda$ is a Lagrange multiplier associated with $\bar u$.
\begin{proposition} The following property holds:
\begin{equation}
\frac{\partial\mathcal{L}}{\partial u}(\bar u,\bar\lambda)v = 0\quad \forall v \in L^2(Q_\omega).
\label{E3.7}
\end{equation}
\label{P3.1}
\end{proposition}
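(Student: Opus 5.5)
By \eqref{E3.5}, \eqref{E3.7} holds as soon as we have the pointwise-in-time identity
\[
\bar\varphi(x,t) + \frac{\bar\lambda(t)}{\gamma(t)}\bar u(x,t) = 0 \quad \text{for a.a. } (x,t) \in Q_\omega.
\]
Indeed, once this is known, the integrand $\int_\omega[\bar\varphi(t) + \frac{\bar\lambda(t)}{\gamma(t)}\bar u(t)]v(t)\dx$ vanishes for a.a. $t$, and hence $\frac{\partial\mathcal{L}}{\partial u}(\bar u,\bar\lambda)v = 0$. The expression makes sense for every $v \in L^2(Q_\omega)$ because of the extension remark preceding the proposition: $\bar u \in \uad^1$ and $\bar\lambda \in L^2(0,\infty)$, so the second term in \eqref{E3.5} is bounded by $\|\bar\lambda\|_{L^2(0,\infty)}\|v\|_{L^2(Q_\omega)}$. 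The first term is fine since $\bar\varphi \in L^2(Q)$.

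To prove the identity I would split $(0,\infty)$, up to a null set on which the corollary to \eqref{E2.16} may fail, into two sets according to whether $\bar\lambda(t)=\|\bar\varphi(t)\|_{L^2(\omega)}$ vanishes.

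\textbf{Case $\bar\lambda(t) = 0$.} Then $\bar\varphi(\cdot,t) = 0$ a.e. in $\omega$. Since $\bar\lambda(t)/\gamma(t) = 0$ and $\gamma(t) > 0$, the identity holds trivially for such $t$.

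\textbf{Case $\bar\lambda(t) > 0$.} Here \eqref{E2.19} gives
\[
\bar u(x,t) = -\gamma(t)\frac{\bar\varphi(x,t)}{\bar\lambda(t)}.
\]
Multiplying by $\bar\lambda(t)/\gamma(t)$, which is legitimate because $\gamma(t) > 0$, yields $\frac{\bar\lambda(t)}{\gamma(t)}\bar u(x,t) = -\bar\varphi(x,t)$, which is exactly the identity.

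The only mildly delicate point is measurability and integrability. The function $t \mapsto \bar\lambda(t)/\gamma(t)$ need not be bounded, since $\gamma$ can go to zero. This is not a real obstacle: the product $\frac{\bar\lambda}{\gamma}\bar u$ equals $-\bar\varphi$ a.e. by the identity, so it lies in $L^2(Q_\omega)$. Equivalently, the estimate $\|\bar u(t)\|_{L^2(\omega)} \le \gamma(t)$ shows that $\|\frac{\bar\lambda(t)}{\gamma(t)}\bar u(t)\|_{L^2(\omega)} \le \bar\lambda(t)$. Either way the integral in \eqref{E3.5} is well defined, and it vanishes for all $v \in L^2(Q_\omega)$.
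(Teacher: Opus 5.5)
Your proof is correct and follows the paper's argument. The paper likewise combines \eqref{E3.5}, the definition $\bar\lambda(t)=\|\bar\varphi(t)\|_{L^2(\omega)}$, and \eqref{E2.19}: the integrand vanishes off $I^+_\gamma$ because $\bar\varphi(t)=0$ there, and on $I^+_\gamma$ one has $\bar\varphi(t)+\frac{\bar\lambda(t)}{\gamma(t)}\bar u(t)=0$. Your split according to whether $\bar\lambda(t)$ vanishes is the same decomposition, and your well-definedness remark via $\|\bar u(t)\|_{L^2(\omega)}\le\gamma(t)$ just makes explicit what the paper leaves implicit.
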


\begin{proof}
From \eqref{E3.5}, the definition of $\bar\lambda$, and \eqref{E2.19}  we infer
\begin{align*}
\frac{\partial\mathcal{L}}{\partial u}(\bar u,\bar\lambda)v &= \int_0^\infty\int_\omega\bar\varphi(t)v(t)\dx\dt + \int_0^\infty\frac{\bar\lambda(t)}{\gamma(t)}\int_\omega\bar u(t)v(t)\dx\dt\\
&= \int_{I_\gamma^+}\int_\omega\Big(\bar\varphi(t) + \frac{\|\bar\varphi(t)\|_{L^2(\omega)}}{\gamma(t)}\bar u(t)\Big)v(t)\dx\dt = 0.
\end{align*}
\end{proof}

Now, we introduce the cone of critical directions associated with $\bar u$: for $\tau > 0$
\[
C^\tau_{\bar u} :=\left\{v\in L^2_\gamma(Q_\omega) : \left\{\begin{array}{l}\displaystyle \int_\omega\bar u(t) v(t)\dx \le 0 \text{ if } t \in I_\gamma,\vspace{2mm}\\\displaystyle \int_0^\infty\frac{\bar\lambda(t)}{\gamma(t)}\int_\omega\bar u(t)v(t)\dx\dt \ge -\tau\|z_{\bar u,v}\|_{L^1(Q)}.\end{array}\right.\right\}.
\]
We assume that
\begin{equation}
\exists \delta > 0 \text{ and } \exists\tau > 0 \text{ such that } \frac{\partial^2\mathcal{L}}{\partial u^2}(\bar u,\bar\lambda)v^2 \ \ge\  \delta\| z_{\bar u,v}\|^2_{L^2(Q)}\quad \forall v\in C^\tau_{\bar u}.
\label{E3.8}
\end{equation}

\begin{theorem}
\label{T3.2}
Under assumption \eqref{E3.8} there exist $\varepsilon>0$ and $\kappa > 0$ such that 
\begin{equation}\label{E3.9}
J(u)\ \ge\ J(\bar u)\ +\ \frac{\kappa}{2}\|y_u-\bar y\|_{L^2(Q)}^2\quad \forall u\in \uad^1 \text{ such that } \|y_u-\bar y\|_{L^\infty(Q)}\le \varepsilon.
\end{equation}
\end{theorem}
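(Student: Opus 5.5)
We will follow the structure of the proof of the box-constrained case, with the Lagrangian $\mathcal{L}(\cdot,\bar\lambda)$ in place of $J$. The first reduction is that for every $u\in\uad^1$ we have $\|u(t)\|_{L^2(\omega)}\le\gamma(t)$. Hence $\frac{1}{2}\int_0^\infty\frac{\bar\lambda}{\gamma}\|u(t)\|^2_{L^2(\omega)}\dt \le \frac12\int_0^\infty\bar\lambda\gamma\dt$, with equality for $\bar u$ because $\bar\lambda=0$ off $I_\gamma$. Therefore $J(u)-J(\bar u)\ge \mathcal{L}(u,\bar\lambda)-\mathcal{L}(\bar u,\bar\lambda)$, and it suffices to bound the latter from below by $c\|z_{\bar u,u-\bar u}\|^2_{L^2(Q)}$; we then conclude via \eqref{EA.7} as before.

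By Proposition \ref{P3.1}, a Taylor expansion gives
\[
\mathcal{L}(u,\bar\lambda)-\mathcal{L}(\bar u,\bar\lambda)=\tfrac12\tfrac{\partial^2\mathcal{L}}{\partial u^2}(\bar u+\theta(u-\bar u),\bar\lambda)(u-\bar u)^2 .
\]
The $\bar\lambda$-part of the Hessian does not depend on the base point. Hence the difference of Hessians equals $[J''(u_\theta)-J''(\bar u)](u-\bar u)^2$, which we control by $\rho\|z_{\bar u,u-\bar u}\|^2_{L^2(Q)}$ through Lemmas \ref{LA.6} and \ref{LA.7} for $\varepsilon$ small. It also helps to use the exact identity $\frac12\int\frac{\bar\lambda}{\gamma}\|u-\bar u\|^2 = \frac12\int\frac{\bar\lambda}{\gamma}(\|u\|^2-\gamma^2) - \int\frac{\bar\lambda}{\gamma}\int_\omega\bar u(u-\bar u)$ on $I_\gamma$. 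Since $\|u\|\le\gamma$, this gives $-\int_0^\infty\frac{\bar\lambda}{\gamma}\int_\omega\bar u(u-\bar u)\dx\dt=J'(\bar u)(u-\bar u)\ge \frac12\int\frac{\bar\lambda}{\gamma}\|u-\bar u\|^2_{L^2(\omega)}\ge0$. Moreover, $v=u-\bar u$ satisfies the first cone condition automatically, because $\int_\omega\bar u(u-\bar u)\le\gamma\|u\|-\gamma^2\le0$ on $I_\gamma$.

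We split into cases as before. \emph{Case I:} $u-\bar u\in C^\tau_{\bar u}$. Then \eqref{E3.8} and the Hessian perturbation bound directly give the growth $\frac{\delta}{4}\|z_{\bar u,u-\bar u}\|^2$.

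\emph{Case II:} the second cone inequality fails, i.e. $J'(\bar u)(u-\bar u)>\tau\|z_{\bar u,u-\bar u}\|_{L^1(Q)}$. Expanding $J$ (not $\mathcal{L}$), we have $J(u)-J(\bar u)=J'(\bar u)(u-\bar u)+\frac12J''(u_\theta)(u-\bar u)^2$. The quadratic term is bounded in absolute value by $(M_J/2+\delta/4)\|z\|^2_{L^2}$ by \eqref{EA.9}, and $\|z\|^2_{L^2}\le\|z\|_{L^\infty}\|z\|_{L^1}\le(1+K_{ad,\infty})\varepsilon\|z\|_{L^1}$ by \eqref{EA.3}. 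Choosing $\varepsilon$ small yields $J(u)-J(\bar u)\ge\frac{\delta}{4}\|z_{\bar u,u-\bar u}\|^2_{L^2}$, exactly as in the box case.

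The main obstacle is that, in contrast with the box case, no intermediate Case III with a splitting $v+w$ is needed; instead one must check the cone membership and the extension of $\frac{\partial^2\mathcal{L}}{\partial u^2}$ to $L^2_\gamma$. This holds since $u-\bar u\in L^2_\gamma(Q_\omega)$ because $\uad^1\subset L^2_\gamma(Q_\omega)$, and $\bar\lambda\in L^\infty$, so the $\bar\lambda$-term is finite. A further subtlety is whether $\bar\lambda/\gamma$ can be unbounded where $\gamma\to0$. Only the combination $\frac{\bar\lambda}{\gamma}\|u-\bar u\|^2\le\bar\lambda\frac{4\gamma^2}{\gamma}$, which is integrable, appears, so this causes no difficulty. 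Finally, with $\varepsilon\le\frac{1}{M_3M_f}$, \eqref{EA.7} gives $\|y_u-\bar y\|_{L^2}\le2\|z_{\bar u,u-\bar u}\|_{L^2}$, yielding \eqref{E3.9} with $\kappa=\delta/32$.
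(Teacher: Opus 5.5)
Your proposal is correct and follows essentially the same route as the paper. Case I uses the reduction $J(u)-J(\bar u)\ge\mathcal{L}(u,\bar\lambda)-\mathcal{L}(\bar u,\bar\lambda)$ with \eqref{E3.7}, \eqref{E3.8} and \eqref{EA.10}; in Case II the first cone condition holds automatically on $I_\gamma$, so the second must fail, giving $J'(\bar u)(u-\bar u)>\tau\|z_{\bar u,u-\bar u}\|_{L^1(Q)}$, which you combine with $\|z_{\bar u,u-\bar u}\|^2_{L^2(Q)}\le(1+K_{ad,\infty})\varepsilon\|z_{\bar u,u-\bar u}\|_{L^1(Q)}$ exactly as the paper does (your constant $\kappa=\delta/32$ is valid but not sharp, since the growth $\frac{\delta}{4}\|z_{\bar u,u-\bar u}\|^2_{L^2(Q)}$ together with \eqref{EA.7} already yields $\kappa=\delta/8$).
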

\begin{proof}
Let $u \in \uad^1$. We assume that $\|y_u - \bar y\|_{L^\infty(Q)} \le \varepsilon$, where $\varepsilon > 0$ is selected, as in the proof of Theorem \ref{T3.2}, in such a way that \eqref{EA.10} holds with $\rho = \frac{\delta}{2}$. Now, we split the proof into two cases.\vspace{2mm}

{\em Case I: $u - \bar u \in C^\tau_{\bar u}$.} We observe that $\|u(t)\|_{L^2(\omega)} \le \gamma(t) = \|\bar u(t)\|_{L^2(\omega)}$ if $\bar\lambda(t) > 0$. Then, we have with \eqref{E3.6}, \eqref{E3.7}, and \eqref{E3.8}
\begin{align*}
&J(u) - J(\bar u) \ge \mathcal{L}(u,\bar\lambda) - \mathcal{L}(\bar u,\bar\lambda) = \frac{1}{2}\frac{\partial^2\mathcal{L}}{\partial u^2}(\bar u + \theta(u - \bar u),\bar\lambda)(u - \bar u)^2\\
&= \frac{1}{2}\frac{\partial^2\mathcal{L}}{\partial u^2}(\bar u,\bar\lambda)(u - \bar u)^2 + \frac{1}{2}[J''(\bar u + \theta(u - \bar u),\bar\lambda) - J''(\bar u)](u - \bar u)^2\\
& \ge \frac{\delta}{2}\|z_{\bar u,u - \bar u}\|_{L^2(Q_\omega)}^2 - \frac{\delta}{4}\|z_{\bar u,u - \bar u}\|_{L^2(Q_\omega)}^2 = \frac{\delta}{4}\|z_{\bar u,u - \bar u}\|_{L^2(Q_\omega)}^2.
\end{align*}

{\em Case II: $u - \bar u \notin C^\tau_{\bar u}$.} We have that for $t \in I^+_\gamma$
\begin{align*}
&\int_\omega\bar u(t)(u(t) - \bar u(t))\dx \le \|\bar u(t)\|_{L^2(\omega)}\|u(t)\|_{L^2(\omega)} - \|\bar u(t)\|^2_{L^2(\omega)}\\
& = \gamma(t)[\|u(t)\|_{L^2(\omega)} - \gamma(t)] \le 0.
\end{align*}
Hence, since $u - \bar u \notin C^\tau_{\bar u}$ we have that
\[
0 = \frac{\partial\mathcal{L}}{\partial u}(\bar u,\bar\lambda)(u - \bar u) < J'(\bar u)(u - \bar u) - \tau\|z_{\bar u,u - \bar u}\|_{L^1(Q_\omega)}.
\]
Therefore, we get that $J'(\bar u)(u - \bar u) > \tau\|z_{\bar u,u - \bar u}\|_{L^1(Q_\omega)}$. Performing a Taylor expansion we obtain with \eqref{EA.9}
\begin{align*}
&J(u) - J(\bar u) = J'(\bar u)(u - \bar u) + \frac{1}{2}J''(u_\theta)(u - \bar u)^2\\
& \ge \tau\|z_{\bar u,u - \bar u}\|_{L^1(Q)} - \frac{M_J}{2}\|z_{u_\theta,u - \bar u}\|^2_{L^2(Q)}.
\end{align*}
where $u_\theta = \bar u + \theta(u- \bar u)$.
From \eqref{EA.4} we infer $\|z_{u_\theta,u - \bar u}\|_{L^2(Q)} \le C_{ad}\varepsilon\|z_{\bar u, u - \bar u}\|_{L^2(Q)}$ with $C_{ad} = 1 + 2M_3M_fM_{ad}$. Once again, using \eqref{EA.6} we get
\begin{align*}
&\|z_{\bar u,u - \bar u}\|^2_{L^2(Q)} \le \|z_{\bar u,u - \bar u}\|_{L^\infty(Q)}\|z_{\bar u,u - \bar u}\|_{L^1(Q)}\\
& \le (1 + K_{ad,\infty})\|y_u - \bar y\|_{L^\infty(Q)}\|z_{\bar u,u - \bar u}\|_{L^1(Q)} \le  (1 + K_{ad,\infty})\varepsilon\|z_{\bar u,u - \bar u}\|_{L^1(Q)}.
\end{align*}
Using the last inequalities we deduce
\[
J(u) - J(\bar u) \ge \Big[\frac{\tau}{ (1 + K_{ad,\infty})\varepsilon} - \frac{M_J}{2}C_{ad}^2\varepsilon^2\Big]\|z_{\bar u,u - \bar u}\|^2_{L^2(Q)}.
\]
Taking $\varepsilon$ small enough we deduce that $J(u) \ge J(\bar u)+ \frac{\delta}{4}\|z_{\bar u,u - \bar u}\|^2_{L^2(Q)}$. Finally, \eqref{E3.9} is obtained with $\kappa = \frac{\delta}{8}$ as at the end of the proof of Theorem \ref{T3.2}.
\end{proof}

\section{Approximation by finite horizon problems}
\label{S4}
In this section, we consider the approximation of solutions of the infinite-horizon problem by  solutions to finite horizon problems. For an infinite horizon problem where a Tikhonov term is present, this was done in \cite{Casas-Kunisch2023C}. 
Let ($\bar u,\bar y)$ denote a local solution of $(\text{P})$. For every $0 < T < \infty$, $j\in \{1,2\}$ and $\rho>0$, we define the finite-horizon control problem
\[
\PbT \quad \min_{u \in \uadt^j} J_T(u): =\frac{1}{2} \int_{Q_T} \left( y_{T,u} - y_d \right)^2 \dx\dt,
\]
where 
\[
\uadt^j := \{u\in L^2(Q_{T,\omega})\vert \ u(\cdot, t) \in K_{\mathrm{ad}}^j(t)\text{ a.a. } t\in (0,T)\},\ \ Q_{T,\omega} = \omega \times (0,T),
\]
and $y_{T,u}$ denotes the solution of the equation
\begin{equation}
\left\{\begin{array}{l}\displaystyle \frac{\partial y}{\partial t} +A y + f( y) 
= g + u \, \chi_{\omega} \ \ \text{in } Q_T, \\
 \partial_{n_A}y = 0 \ \text{on } \Sigma_T = \Gamma \times (0, T),\ \ 
y(0) = y_0  \text{ in } \Omega.
\end{array}\right.
\label{E4.1}
\end{equation}

We extend any $u \in \uadt^j$ to $\uad^j$ by setting $u(t) = 0$ for $t > T$, denoted as $\hat{u}$. Next we prove that problems \PbT realize an approximation of \Pb.

\begin{theorem}
For every $T < \infty$, \PbT has at least one solution. If $\{u_T\}_{T > 0}$ is a family of solutions of problems \PbT, then there exist sequences $\{u_{T_k}\}_{k = 1}^\infty$ with $T_k \to \infty$ as $k \to \infty$ such that $\hat u_{T_k} \stackrel{*}{\rightharpoonup} \bar u$ in $L^\infty(0,\infty;L^2(\omega)) \cap L^2(Q_\omega)$. Moreover, every of these limit points $\bar u$ is a solution of \Pb and in addition the convergence $y_{\hat u_{T_k}} \to \bar y= y_{\bar u}$ strongly in $L^\infty(Q)$ holds.
\label{T4.1}
\end{theorem}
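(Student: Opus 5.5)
Existence for \PbT follows by the direct method, exactly as for \Pb. The set $\uadt^j$ is nonempty (it contains $0$), closed, convex and bounded in $L^2(Q_{T,\omega})$, hence weakly compact. For a minimizing sequence we extend each control by zero to $\hat u_k\in\uad^j$ and note that $y_{T,u_k}=y_{\hat u_k}|_{Q_T}$ by uniqueness for \eqref{E4.1}. The last statement of Theorem \ref{T2.1} then gives $y_{\hat u_k}\to y_{\hat u}$ in $L^\infty(Q)$ along a weakly convergent subsequence. Since $J_T$ is continuous along this convergence, a minimizer exists.

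For the family $\{u_T\}$, the extensions satisfy $\|\hat u_T(t)\|_{L^2(\omega)}\le h(t)$, with $h$ as in the proof of Theorem \ref{T2.1} ($h\in L^2\cap L^\infty$). So $\{\hat u_T\}$ is bounded in $L^\infty(0,\infty;L^2(\omega))\cap L^2(Q_\omega)$. We extract $T_k\to\infty$ with $\hat u_{T_k}\stackrel{*}{\rightharpoonup}\tilde u$. Because $\uad^j$ is convex and closed, hence weakly closed in $L^2(Q_\omega)$, we have $\tilde u\in\uad^j$. Theorem \ref{T2.1} then yields $y_{\hat u_{T_k}}\to y_{\tilde u}$ strongly in $L^\infty(Q)$ and weakly in $W(0,\infty)$. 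This settles the convergence statements once we show that $\tilde u$ is optimal.

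To prove optimality, fix any $u\in\uad^j$ and let $u|_T$ be its restriction to $(0,T)$. Then
\[
J_{T_k}(u_{T_k})\le J_{T_k}(u|_{T_k})\le J(u),
\]
where the last inequality holds because $y_{T,u|_T}=y_u|_{Q_T}$ and the integrand is nonnegative. On the other side, for fixed $T'$ and $T_k\ge T'$,
\[
\tfrac12\int_{Q_{T'}}(y_{\hat u_{T_k}}-y_d)^2\le J_{T_k}(u_{T_k}).
\]
Letting $k\to\infty$ and using uniform convergence on the bounded set $Q_{T'}$, we get $\frac12\int_{Q_{T'}}(y_{\tilde u}-y_d)^2\le\liminf_k J_{T_k}(u_{T_k})$. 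Letting $T'\to\infty$ by monotone convergence gives $J(\tilde u)\le \liminf_k J_{T_k}(u_{T_k})\le J(u)$. Hence $\tilde u$ is a (global) solution of \Pb. Calling it $\bar u$ gives the stated conclusions.

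The main obstacle is the lower semicontinuity on the unbounded domain. Strong $L^\infty(Q)$ convergence alone does not control $\int_Q(y-y_d)^2$ near $t=\infty$, which is why I use the truncation at $T'$ with Fatou/monotone convergence rather than a direct limit. As a side result, taking $u=\tilde u$ gives $J_{T_k}(u_{T_k})\to J(\tilde u)$, and combined with weak convergence in $W(0,\infty)$ this also yields $y_{\hat u_{T_k}}\to\bar y$ in $L^2(Q)$. Note also that $J(\hat u_{T_k})$ involves the tail $\int_{T_k}^\infty$, which the argument never needs to control.
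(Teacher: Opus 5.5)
Your proof is correct and follows the same route as the paper: existence for each finite-horizon problem by the direct method, weak-$*$ compactness of the zero extensions in $U_{\rm ad}^j$, and Theorem~\ref{T2.1} for the uniform convergence of the states; the only difference is that you write out the optimality of the limit via $J_{T_k}(u_{T_k})\le J_{T_k}(u|_{T_k})\le J(u)$ plus truncated lower semicontinuity, a step the paper calls immediate and delegates to \cite[Theorem 4.1]{Casas-Kunisch2023C}. Your closing side remark is under-justified, since convergence of $J_{T_k}(u_{T_k})$ only gives $L^2$ convergence of the states on $Q_{T_k}$, and the tail on $\Omega\times(T_k,\infty)$ needs a separate argument such as Lemma~\ref{LA.3}; this does not affect the theorem.
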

The proof of the existence of a solution for every problem \PbT is standard. The boundedness of the admissible sets $\uad^j$ in $L^\infty(0,\infty;L^2(\omega)) \cap L^2(Q_\omega)$ implies tha existence of sequences $\hat u_{T_k} \stackrel{*}{\rightharpoonup} \bar u$. From Theorem \ref{T2.1} we infer the convergence  $y_{\hat u_{T_k}} \to \bar y$ strongly in $L^\infty(Q)$. Finally the proof that $\bar u$ is a solution of \Pb is immediate; see \cite[Theorem 4.1]{Casas-Kunisch2023C} for a similar proof under the presence of the Tikhonov term.

\begin{remark}
Let $\{u_{T_k}\}_{k = 1}^\infty$ be a sequence of functions with $u_{T_k} \in L^2(Q_{T_k,\omega})$ and $T_k \to \infty$. Let $v \in L^2(Q_\omega)$ be arbitrarily chosen. We denote by $\hat u_{T_k}$ and $\tilde u_{T_k}$ the extensions of $u_{T_k}$ to $Q_\omega$ by 0 and by $v$, respectively. Then, we have that $\hat u_{T_k} \rightharpoonup u$ in $L^2(Q_\omega)$ if and only if $\tilde u_{T_k} \rightharpoonup u$ in $L^2(Q_\omega)$. Indeed, it is enough to notice that for every $w \in L^2(Q_\omega)$
\[
\Big|\int_{Q_\omega}w(\tilde u_{T_k} - \hat u_{T_k})\dx\dt\Big| = \Big|\int_{Q_{T_k,\omega}}wv\dx\dt\Big| \le \|w\|_{L^2(Q_{T_k,\omega})}\|v\|_{L^2(Q_{T_k,\omega})} \stackrel{k \to \infty}{\longrightarrow} 0.
\]
\label{R4.1}
\end{remark}

We have a converse theorem.

\begin{theorem}
Let $\bar u$ be a strict strong local solution of \Pb. Then, there exists a family $\{u_T\}_{T > 0}$ of local solutions of problems \PbT such that $\hat u_T \stackrel{*}{\rightharpoonup} \bar u$ in $L^\infty(0,\infty;L^2(\omega)) \cap L^2(Q_\omega)$ as $T \to \infty$ and $J_T(u_T) \le J_T(\bar u)$. Moreover, the convergence $y_{\hat u_{T_k}} \to \bar y= y_{\bar u}$ strongly in $L^\infty(Q)$ holds.
\label{T4.2}
\end{theorem}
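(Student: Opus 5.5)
We follow the classical localization argument of Casas--Kunisch for the Tikhonov-regularized case (\cite[Theorem 4.2]{Casas-Kunisch2023C}), with the $L^\infty(Q)$ state neighbourhood of Definition \ref{D2.7} replacing the control neighbourhood. Let $\varepsilon>0$ be such that $J(\bar u)<J(u)$ for all $u\in\uad^j\setminus\{\bar u\}$ with $\|y_u-\bar y\|_{L^\infty(Q)}\le\varepsilon$ (strictness). For every $T$ we consider the localized problem
\[
\mbox{(P$_{T,\varepsilon}$)}\quad \min J_T(u)\ \text{ subject to } u\in\uadt^j,\ \|y_{T,u}-\bar y\|_{L^\infty(Q_T)}\le\varepsilon .
\]
The restriction $\bar u_{|Q_{T,\omega}}$ is feasible, since $y_{T,\bar u}=\bar y_{|Q_T}$. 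The feasible set is bounded in $L^\infty(0,T;L^2(\omega))\cap L^2(Q_{T,\omega})$. The $L^\infty$ constraint is closed under weak convergence, by the finite-horizon version of the last statement of Theorem \ref{T2.1}: Hölder estimates plus compactness in $C(\bar Q_T)$. Hence the direct method gives a solution $u_T$ of (P$_{T,\varepsilon}$) with $J_T(u_T)\le J_T(\bar u)$.

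Next we take an arbitrary sequence $T_k\to\infty$. By boundedness of $\uad^j$ we extract a subsequence with $\hat u_{T_k}\stackrel{*}{\rightharpoonup}\tilde u$ in $L^\infty(0,\infty;L^2(\omega))\cap L^2(Q_\omega)$, and $\tilde u\in\uad^j$ by convexity and closedness. By Remark \ref{R4.1} we may equally use the extensions $\tilde u_{T_k}$ of $u_{T_k}$ by $\bar u$ on $(T_k,\infty)$. These also converge weakly to $\tilde u$, and by Theorem \ref{T2.1} $y_{\tilde u_{T_k}}\to y_{\tilde u}$ in $L^\infty(Q)$.

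The key step is to show $\tilde u=\bar u$. On $Q_{T_k}$ we have $y_{\tilde u_{T_k}}=y_{T_k,u_{T_k}}$, so $\|y_{\tilde u_{T_k}}-\bar y\|_{L^\infty(Q_{T_k})}\le\varepsilon$. On $Q^{T_k}$ the difference $y_{\tilde u_{T_k}}-\bar y$ solves a linearized equation with zero right-hand side, since the controls coincide there, and initial value bounded by $\varepsilon$ in $L^\infty(\Omega)$. Since $f'\ge0$, the maximum principle keeps it below $\varepsilon$. Passing to the limit gives $\|y_{\tilde u}-\bar y\|_{L^\infty(Q)}\le\varepsilon$. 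For the costs, the weak lower semicontinuity argument is
\[
J(\tilde u)\le\liminf_k J(\tilde u_{T_k})=\liminf_k\Big(J_{T_k}(u_{T_k})+\tfrac12\int_{T_k}^\infty\!\!\int_\Omega(y_{\tilde u_{T_k}}-y_d)^2\Big)\le \limsup_k\Big( J_{T_k}(\bar u)+r_k\Big),
\]
where $r_k$ is the tail term. We expect controlling $r_k$ to be the main obstacle. We would write $y_{\tilde u_{T_k}}-y_d=(y_{\tilde u_{T_k}}-\bar y)+(\bar y-y_d)$. The second piece has vanishing tail because $\bar y-y_d\in L^2(Q)$. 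For the first piece we use an $L^2$ energy estimate on $(T_k,\infty)$ (as in \eqref{E2.4}, with zero forcing and $f'\ge0$). It gives a bound by $C\|y_{\tilde u_{T_k}}(T_k)-\bar y(T_k)\|_{L^2(\Omega)}$, which in turn is controlled by $\|y_{\tilde u_{T_k}}-y_{\tilde u}\|_{L^\infty(Q)}+\|y_{\tilde u}(T_k)-\bar y(T_k)\|_{L^2(\Omega)}$. This tends to $0$ because both $y_{\tilde u}$ and $\bar y$ belong to $W(0,\infty)$, so $\|y(t)\|_{L^2(\Omega)}\to0$. Hence $J(\tilde u)\le J(\bar u)$. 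Strictness then forces $\tilde u=\bar u$, and since the limit is unique the whole family converges, together with $y_{\hat u_T}\to\bar y$ in $L^\infty(Q)$, the latter via Theorem \ref{T2.1}.

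Finally, for $T$ large we get $\|y_{T,u_T}-\bar y\|_{L^\infty(Q_T)}\to0$, so the localization constraint is inactive: $\|y_{T,u_T}-\bar y\|_{L^\infty(Q_T)}<\varepsilon/2$. Then any $u\in\uadt^j$ with $\|y_{T,u}-y_{T,u_T}\|_{L^\infty(Q_T)}\le\varepsilon/2$ is feasible for (P$_{T,\varepsilon}$). Therefore $u_T$ is a strong local solution of \PbT for all sufficiently large $T$. For small $T$ we may take any global solution of \PbT. This possible modification does not affect the asymptotic statements; the bound $J_T(u_T)\le J_T(\bar u)$ holds trivially for global solutions.
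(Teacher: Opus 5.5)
Your proposal is correct and follows the same localization argument as the paper: solve the state-localized problems (P$_{T,\rho}$) (your (P$_{T,\varepsilon}$)), extract a weak$^*$ limit, show it is feasible for (P$_\rho$) with cost at most $J(\bar u)$, and conclude by strictness. The differences are only technical. You extend by $\bar u$ and use the maximum principle plus an energy estimate on $(T_k,\infty)$, where the paper passes to the limit on bounded cylinders $Q_T$. You also supply the inactive-constraint argument showing that $u_T$ is a strong local solution of (P$_T$) for large $T$, which the paper leaves implicit.
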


\begin{proof}
Since $\bar u$ is a strict strong local minimizer of \Pb, there exists $\rho > 0$ such that
\[
J(\bar u) < J(u)\quad \forall u \in \uad^j \text{ such that } \|y_u - \bar y\|_{L^\infty(Q)} \le \rho.
\]
We define the control problems
\[
\Pbr \quad \min_{u \in \uad^j,\ \|y_u - \bar y\|_{L^\infty(Q)} \le \rho} J(u) \quad\text{and}\quad \PbTr \quad \min_{u \in \uadt^j,\ \|y_{T,u} - \bar y\|_{L^\infty(Q_T)} \le \rho} J_T(u).
\]
It is obvious that $\bar u$ is the unique solution of \Pbr. On the other hand, the restriction of $\bar u$ to $Q_{T,\omega}$ is an admissible control for problem \PbTr. Therefore, the set of admissible points of \PbTr is nonempty and closed with respect to the weak$^*$ topology of $L^\infty(0,\infty;L^2(\omega)) \cap L^2(Q_\omega)$. Then, the proof the existence of a solution $u_T$ follows with Theorem \ref{T2.1}. Let $\{u_{T_k}\}_{k = 1}^\infty$ be a sequence of solutions of problems \PbTr with $T_k \to \infty$ as $k \to \infty$. Since $\hat u_{T_k} \in \uad^j$ for every $k$, we deduce that, taking a subsequence, $\hat u_{T_k} \stackrel{*}{\rightharpoonup} \tilde u$ in $L^\infty(0,\infty;L^2(\omega)) \cap L^2(Q_\omega)$. Using again Theorem \ref{T2.1} we have that $y_{u_{T_k}} \to y_{\tilde u}$ strongly in $L^\infty(Q)$. Since $\|y_{u_{T_ k}} - \bar y\|_{L^\infty(Q)} \le \rho$ for every $T_k > T$ we infer that  $\|y_{\tilde u} - \bar y\|_{L^\infty(Q)} \le \rho$ for all $T < \infty$. Therefore, we have that $\|y_{\tilde u} - \bar y\|_{L^\infty(Q)} \le \rho$ and consequently $\tilde u$ is a feasible point of \Pbr. Passing to the limit in the inequality $J_{T_k}(u_{T_k}) \le J_{T_k}(\bar u)$ we deduce that $J(\tilde u) \le J(\bar u)$. Since $\bar u$ is the unique solution of \Pbr, we infer that $\tilde u = \bar u$ and the theorem follows.
\end{proof}

\begin{theorem}
Let $\bar u$ a local solution of problem \Pb satisfying the second-order sufficient optimality condition. Let $\{u_T\}_{T > 0}$ be a sequence of local minimizers of problems \PbT as introduced in Theorem \ref{T4.2}. Then, there exist $T^* < \infty$ and a constant $C$ independent of $T$ such that for every $T \ge T^*$
\begin{equation}
\|y_{u_T} - \bar y\|_{L^2(Q_T)} \le C\Big(\|y_{u_T}(T)\|_{L^2(\Omega)} + \|g\|_{L^2(\Omega \times (T,\infty))} + \|y_d\|_{L^2(\Omega \times (T,\infty))}\Big).
\label{E4.2}
\end{equation}
\label{T4.3}
\end{theorem}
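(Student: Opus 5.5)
The plan is to compare $u_T$ with $\bar u$ through a suitable extension of $u_T$ to $(0,\infty)$, apply the quadratic growth of Theorem \ref{T3.2} (in the version matching $j$), and then control the tail of the infinite-horizon cost. Let $\tilde u_T$ be the extension of $u_T$ by $\bar u$ for $t>T$. It belongs to $\uad^j$ because the constraints are pointwise in time. The map $y_{T,u}$ agrees with $y_{\tilde u_T}$ on $Q_T$. By Theorem \ref{T4.2} we know $\hat u_T \stackrel{*}{\rightharpoonup} \bar u$, and Remark \ref{R4.1} turns this into $\tilde u_T\rightharpoonup\bar u$ in $L^2(Q_\omega)$. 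Theorem \ref{T2.1} then gives $\|y_{\tilde u_T}-\bar y\|_{L^\infty(Q)}\to0$. Hence there is $T^*$ with $\|y_{\tilde u_T}-\bar y\|_{L^\infty(Q)}\le\varepsilon$ for all $T\ge T^*$, where $\varepsilon$ is the radius from \eqref{E3.2} or \eqref{E3.9}. For these $T$ we get
\[
\frac{\kappa}{2}\|y_{\tilde u_T}-\bar y\|^2_{L^2(Q)}\le J(\tilde u_T)-J(\bar u).
\]

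Next I would split both costs at time $T$: $J(u)=J_T(u|_{Q_T})+\frac12\int_T^\infty\!\int_\Omega(y_u-y_d)^2$. Theorem \ref{T4.2} gives $J_T(u_T)\le J_T(\bar u)$, so
\[
J(\tilde u_T)-J(\bar u)\le \frac12\int_T^\infty\!\!\int_\Omega\big[(y_{\tilde u_T}-y_d)^2-(\bar y-y_d)^2\big]\le \frac12\|y_{\tilde u_T}-\bar y\|_{L^2(Q^T)}\big(\|y_{\tilde u_T}\|_{L^2(Q^T)}+\|\bar y\|_{L^2(Q^T)}+2\|y_d\|_{L^2(Q^T)}\big),
\]
with $Q^T=\Omega\times(T,\infty)$.

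The main obstacle is estimating $\|y_{\tilde u_T}-\bar y\|_{L^2(Q^T)}$ by data living at time $T$ and beyond. Set $z=y_{\tilde u_T}-\bar y$. On $(T,\infty)$ both states carry the same control $\bar u$, so $z$ solves $\partial_t z+Az+f'(\hat y)z=0$ with initial value $z(T)=y_{u_T}(T)-\bar y(T)$. Since $f'\ge0$, the energy estimate \eqref{E2.2} gives $\|z\|_{L^2(Q^T)}\le C\|z(T)\|_{L^2(\Omega)}$. The remaining factors are $\|y_{\tilde u_T}\|_{L^2(Q^T)}$ and $\|\bar y\|_{L^2(Q^T)}$. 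Applying the estimate \eqref{E2.4} on $(T,\infty)$, I would bound them by $\|y(T)\|_{L^2(\Omega)}+\|g\|_{L^2(Q^T)}+\|\bar u\|_{L^2(Q^T_\omega)}$. The $\bar u$-tail has no counterpart in \eqref{E4.2}, so it has to be removed.

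Here I would use that the term involving $\bar u$ cancels in the difference. Writing $(y_{\tilde u_T}-y_d)^2-(\bar y-y_d)^2=z^2+2z(\bar y-y_d)$, the cross term should be handled through the adjoint $\bar\varphi$. Testing its equation \eqref{E2.15} on $(T,\infty)$ with $z$ gives $\int_{Q^T}(\bar y-y_d)z\approx\int_\Omega\bar\varphi(T)z(T)$, up to a quadratic remainder from $f$ controlled by the $L^\infty$ smallness. The question is then which bound on $\bar\varphi(T)$ is natural. Its norm $\|\bar\varphi(T)\|_{L^2(\Omega)}$ is controlled by $\|\bar y-y_d\|_{L^2(Q^T)}$, which involves $\bar y$ on $(T,\infty)$ and again is not in \eqref{E4.2}. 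So I expect the right path is to bound $\|z(T)\|$ and then handle $\|\bar y(T)\|$, either by enlarging $T^*$ and using the decay of the data tails, or by inserting and subtracting $y_{u_T}(T)$ so that the difference is absorbed on the left side.

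Finally, since $\|z(T)\|_{L^2(\Omega)}\le C\|z\|_{W(0,\infty)}$ and similar bounds hold, Young's inequality lets the $\|z\|^2$ terms be absorbed for $T\ge T^*$. This should yield \eqref{E4.2} with $\|y_{u_T}(T)\|$, $\|g\|_{L^2(Q^T)}$, and $\|y_d\|_{L^2(Q^T)}$ on the right. I expect this bookkeeping, namely getting rid of the $\bar y(T)$ and $\bar u$-tail contributions, to be the delicate step.
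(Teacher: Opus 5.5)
Your argument stalls at exactly the step you flag as delicate, and the remedies you sketch do not close it. The source of the trouble is the choice to extend $u_T$ by $\bar u$. On $(T,\infty)$ the state $y_{\tilde u_T}$ is then driven by $\bar u$, so every tail bound you can write contains at least one of $\|\bar u\|_{L^2(\omega\times(T,\infty))}$, $\|\bar y(T)\|_{L^2(\Omega)}$ (through $z(T)=y_{u_T}(T)-\bar y(T)$), or $\|\bar\varphi(T)\|_{L^2(\Omega)}$ after the adjoint manipulation. None of these is bounded by a constant times the right-hand side of \eqref{E4.2}. This does not change if you drop $-(\bar y-y_d)^2$ instead of pairing the two tails, because the tail of $y_{\tilde u_T}$ still carries $\bar u$. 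Enlarging $T^*$ only makes these terms small; it does not make them proportional to the data in \eqref{E4.2}.

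The proposed absorption also fails. You bound $\|z(T)\|_{L^2(\Omega)}$ by $\|z\|_{W(0,\infty)}$, but the quadratic growth \eqref{E3.2}/\eqref{E3.9} only controls $\|z\|_{L^2(Q)}$. Without a Tikhonov term there is no way to bound $\|z\|_{W(0,\infty)}$, which depends on $\tilde u_T-\bar u$, by $\|z\|_{L^2(Q)}$. Likewise, $\frac12\|z\|^2_{L^2(\Omega\times(T,\infty))}$ on the right cannot be absorbed by $\frac{\kappa}{2}\|z\|^2_{L^2(Q)}$ on the left, since $\kappa$ is small.

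The missing idea is to extend $u_T$ by zero instead. The extension $\hat u_T$ is admissible because $0\in K^j_{\rm ad}(t)$, using $\alpha\le 0\le\beta$ and $\gamma>0$. Theorem \ref{T4.2} gives $\hat u_T\rightharpoonup\bar u$, and then Theorem \ref{T2.1} gives $\|y_{\hat u_T}-\bar y\|_{L^\infty(Q)}\le\varepsilon$ for $T\ge T^*$. Do not try to pair the two tails. Simply discard $-\frac12\int_T^\infty\|\bar y(t)-y_d(t)\|^2_{L^2(\Omega)}\dt\le 0$ and use $J_T(u_T)\le J_T(\bar u)$ to get
\[
\frac{\kappa}{2}\|y_{T,u_T}-\bar y\|^2_{L^2(Q_T)}\le J(\hat u_T)-J(\bar u)\le \frac12\|y_{\hat u_T}-y_d\|^2_{L^2(\Omega\times(T,\infty))}.
\]
On $(T,\infty)$ the state $y_{\hat u_T}$ solves the uncontrolled equation with initial value $y_{T,u_T}(T)$. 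Applying \eqref{E2.4} on $(T,\infty)$ gives $\|y_{\hat u_T}\|_{L^2(\Omega\times(T,\infty))}\le C_0\big(\|y_{u_T}(T)\|_{L^2(\Omega)}+\|g\|_{L^2(\Omega\times(T,\infty))}\big)$. This yields \eqref{E4.2} with $C=\max\{C_0,1\}/\sqrt{\kappa}$, and no $\bar y$ or $\bar u$ tail ever appears.
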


\begin{proof}
Let us denote by $\hat u_T$ the extension of $u_T$ by zero to $Q_\omega$. Since $\hat u_T \rightharpoonup \bar u$ in $L^2(Q_\omega)$ and $\{\hat u_T\}_{T > 0}$ is bounded in $L^\infty(0,\infty;L^2(\omega))$, then we infer that $\hat u_T \rightharpoonup \bar u$ in $L^p(0,\infty;L^2(\omega)$ as well. From Theorem \ref{T2.1} the convergence $y_{\hat u_T} \to \bar y$ in $L^\infty(Q)$ follows. Let $\varepsilon > 0$ be given by \eqref{E3.2} or \eqref{E3.9} and take $T^* < \infty$ such that $\|y_{\hat u_T} - \bar y\|_{L^\infty(Q)} \le \varepsilon$ for all $T \ge T^*$. Then, we have
\[
J(\bar u) + \frac{\kappa}{2}\|y_{\hat u_T} - \bar y\|^2_{L^2(Q)} \le J(\hat u_T)\quad \forall T \ge T^*.
\]
Then, using the optimality of $u_T$ we get from the above inequality
\begin{align}
&\|y_{T,u_T} - \bar y\|_{L^2(Q_T)} \le \Big(\frac{2}{\kappa}\Big[J_T(u_T) - J_T(\bar u) + \frac{1}{2}\int_T^\infty\|y_{\hat u_T}(t) - y_d(t)\|^2_{L^2(\Omega)}\Big]\Big)^{\frac{1}{2}}\notag\\
&\le \frac{1}{\sqrt{\kappa}}\Big(\|y_{\hat u_T}\|_{L^2(\Omega \times (T,\infty))} + \|y_d\|_{L^2(\Omega \times (T,\infty))}\Big).
\label{E4.3}
\end{align}
Let us estimate $\|y_{\hat u_T}\|_{L^2(\Omega \times (T,\infty))}$. We notice that $y_{\hat u_T}$ satisfies
\[
\left\{\begin{array}{l}\displaystyle \frac{\partial y_{\hat u_T}}{\partial t} +A y_{\hat u_T} + f(y_{\hat u_T}) 
= g \ \ \text{in } \Omega \times T,\infty), \\
\partial_{n_A}y_{\hat u_T} = 0 \ \text{on } \Gamma \times (T,\infty),\ \ 
y_{\hat u_T}(T) = y_{T,u_T}(T)  \text{ in } \Omega.
\end{array}\right.
\]
This implies the existence of a constant $C_0$ such that
\[
\|y_{\hat u_T}\|_{L^2(\Omega \times (T,\infty))} \le C_0\Big(\|y_{u_T}(T)\|_{L^2(\Omega)} + \|g\|_{L^2(\Omega \times (T,\infty))}\Big).
\]
Inserting this in \eqref{E4.3} we infer \eqref{E4.2} for $C = \max\{C_0,1\}/\sqrt{\kappa}$.
\end{proof}

\appendix
\section{Estimates for the state and adjoint equations}
In order to prove the main theorems of this paper, we establish in the following, technical results. Along this appendix $\bar u$ is a fixed control of $\uad^j$ for $j = 1$ or 2, and $\bar y$ and $\bar\varphi$ are its associated state and adjoint state.

\begin{lemma}
The following properties hold for all $u \in \uad^j$, $j\in\{1,2\}$, and all $v \in L^2(Q_\omega)$
\begin{align}
&\|z_{u,v} - z_{\bar u,v}\|_{L^2(Q)} \le M_3M_f\|y_u - \bar y\|_{L^\infty(Q)}\|z_{\bar u,v}\|_{L^2(Q)},\label{EA.1}\\
&\|y_u - \bar y - z_{\bar u,u - \bar u}\|_{L^2(Q)}\le\frac{M_3M_f}{2}\|y_u - \bar y\|_{L^\infty(Q)}\|y_u - \bar y\|_{L^2(Q)},\label{EA.2}\\
&\|y_u - \bar y - z_{\bar u,u - \bar u}\|_{L^\infty(Q)} \le K_{ad,\infty}\|y_u - \bar y\|_{L^\infty(Q)}.\label{EA.3}
\end{align}
where $M_3$ was introduced in \eqref{E2.9} and $M_f = \sup_{|s| \le M_{ad}}|f''(s)|$, $M_{ad}$ being defined in \eqref{E2.6}.
\label{LA.1}
\end{lemma}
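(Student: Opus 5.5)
All three estimates follow by writing the relevant difference as the solution of a linear parabolic equation with potential $f'(\cdot)\ge 0$. The right-hand side is then controlled by a second-order Taylor remainder of $f$. Throughout, $\|y_u\|_{L^\infty(Q)},\|\bar y\|_{L^\infty(Q)}\le M_{ad}$ by \eqref{E2.6}, so every intermediate point $\bar y+\theta(y_u-\bar y)$ lies in $[-M_{ad},M_{ad}]$, and $|f''|\le M_f$ there.

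\emph{Proof of \eqref{EA.1}.} Set $\zeta=z_{u,v}-z_{\bar u,v}$ and subtract the two versions of \eqref{E2.7}:
\[
\partial_t\zeta+A\zeta+f'(y_u)\zeta=[f'(\bar y)-f'(y_u)]z_{\bar u,v},\qquad \zeta(0)=0.
\]
The mean value theorem gives $|f'(\bar y)-f'(y_u)|\le M_f|y_u-\bar y|$. The right-hand side therefore has $L^2(Q)$ norm at most $M_f\|y_u-\bar y\|_{L^\infty(Q)}\|z_{\bar u,v}\|_{L^2(Q)}$. The estimate \eqref{E2.9} holds for a general $L^2(Q)$ source, not only for sources of the form $v\chi_\omega$; this is \cite[Theorem A.3]{Casas-Kunisch2023C}, with the potential $f'(y_u)\ge0$ and constant $M_3$ uniform. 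Applying it to $\zeta$ yields \eqref{EA.1}.

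\emph{Proof of \eqref{EA.2}.} Set $\psi=y_u-\bar y-z_{\bar u,u-\bar u}$. Subtract the state equations and the linearized equation, and expand by Taylor: $f(y_u)-f(\bar y)=f'(\bar y)(y_u-\bar y)+\tfrac12 f''(\hat y)(y_u-\bar y)^2$. This gives
\[
\partial_t\psi+A\psi+f'(\bar y)\psi=-\tfrac12 f''(\hat y)(y_u-\bar y)^2,\qquad \psi(0)=0.
\]
The source is bounded in $L^2(Q)$ by $\tfrac{M_f}{2}\|y_u-\bar y\|_{L^\infty(Q)}\|y_u-\bar y\|_{L^2(Q)}$. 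The same $L^2$ estimate as above then gives \eqref{EA.2}.

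\emph{Proof of \eqref{EA.3}; this is the main obstacle.} Here an $L^\infty(Q)$ bound is needed on the infinite horizon. The source in the equation for $\psi$ is $h=-\tfrac12 f''(\hat y)(y_u-\bar y)^2$, and we only know that $\|h\|_{L^\infty(Q)}\le M_fM_{ad}\|y_u-\bar y\|_{L^\infty(Q)}$. A source in $L^\infty(Q)$ alone does not give a uniform $L^\infty$ bound on $(0,\infty)$ via \eqref{E2.5}, since that estimate needs $L^2(Q)\cap L^p(0,\infty;L^2)$ norms of the source. I would avoid this difficulty by not estimating $\psi$ directly. Instead, write $z_{\bar u,u-\bar u}$ as the solution of the linear equation
\[
\partial_t z+Az+f'(\bar y)z=\big(f'(\bar y)-a\big)(y_u-\bar y),\qquad z(0)=0,
\]
obtained by comparison with $y_u-\bar y$. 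Here $a=\int_0^1f'(\bar y+s(y_u-\bar y))\,ds\ge0$, so that $\partial_t(y_u-\bar y)+A(y_u-\bar y)+a(y_u-\bar y)=(u-\bar u)\chi_\omega$. Then $\psi$ solves the equation with potential $f'(\bar y)$ and source $(a-f'(\bar y))(y_u-\bar y)$, which is pointwise bounded by $M_fM_{ad}|y_u-\bar y|$. To obtain an $L^\infty$ bound with a constant independent of the horizon, I would use that $a_0\not\equiv0$ makes the semigroup generated by $-A$ with Neumann conditions exponentially stable in $L^\infty(\Omega)$: \eqref{E2.2} gives $L^2$ decay, and ultracontractivity upgrades it. Adding the nonnegative potential only improves this, by the comparison principle. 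Duhamel's formula then gives $\|\psi\|_{L^\infty(Q)}\le C\sup_t\|h(t)\|_{L^\infty(\Omega)}$, because $\int_0^\infty\|e^{-sA}\|_{L^\infty\to L^\infty}\,ds<\infty$. This yields \eqref{EA.3} with $K_{ad,\infty}=CM_fM_{ad}$, where the constant depends only on $A$, $\Omega$ and $M_{ad}$. If the semigroup argument proves awkward, the fallback is a Stampacchia truncation on unit time windows $(k,k+1)$: use local $L^\infty$ estimates there, and carry the initial value from window to window using the exponential $L^2$ decay.
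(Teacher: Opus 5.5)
\eqref{EA.1} and \eqref{EA.2} match the paper's proof. In both you subtract the linear equations, bound the source by the mean value theorem or the Taylor remainder, and apply \eqref{E2.9} with a general $L^2(Q)$ source. For \eqref{EA.3} you take a genuinely different route.

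The paper applies the $L^\infty$ estimate of \cite{Casas-Kunisch2025} directly to the equation for $\psi$. That estimate bounds $\|\psi\|_{L^\infty(Q)}$ by $\|h\|_{L^2(Q)}+\|h\|_{L^\infty(Q)}$, where $h=-\frac12 f''(\hat y)(y_u-\bar y)^2$. You overlooked that $h$ is also controlled in $L^2(Q)$: $\|h\|_{L^2(Q)}\le\frac{M_f}{2}\|y_u-\bar y\|_{L^\infty(Q)}\|y_u-\bar y\|_{L^2(Q)}$, and $\|y_u-\bar y\|_{L^2(Q)}\le 2M_{ad}$ by \eqref{E2.6}. So the ``main obstacle'' you describe is not there. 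Even the linear analogue of \eqref{E2.5} would apply, after interpolating the $L^p(0,\infty;L^2(\Omega))$ norm of $h$ between $L^2(Q)$ and $L^\infty(Q)$.

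Your detour through $a$ also leaves the problem unchanged. The term $(f'(\bar y)-a)(y_u-\bar y)$ is just the integral form of the same Taylor remainder, so $\psi$ solves exactly the same equation. Two small slips: your display is the equation for $\psi$, not for $z_{\bar u,u-\bar u}$, and the source is $(f'(\bar y)-a)(y_u-\bar y)$, not $(a-f'(\bar y))(y_u-\bar y)$.

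What is genuinely new is your semigroup step, and it is correct:
\begin{itemize}
\item Compare $|\psi|\le\Psi$, where $\partial_t\Psi+A\Psi=|h|$ and $\Psi(0)=0$; this is exactly where $f'(\bar y)\ge0$ is used.
\item The Neumann semigroup is $L^\infty$-contractive. Thanks to $a_0\not\equiv0$, \eqref{E2.2} and ultracontractivity on the Lipschitz domain, it also decays exponentially in $L^\infty(\Omega)$.
\item Duhamel's formula applied to $\Psi$ then gives $\|\psi\|_{L^\infty(Q)}\le C\|h\|_{L^\infty(Q)}$.
\end{itemize}
This buys a self-contained and slightly stronger estimate that needs no time integrability of the source, at the cost of heat-kernel machinery. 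The paper's route is a one-line citation that instead relies on the uniform $L^2(Q)$ bound of the states over $\uad^j$.
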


\begin{proof}
Let us prove \eqref{EA.1}. We set $w = z_{u,v} - z_{\bar u,v}$. Subtracting the equations satisfied by $z_{u,v}$ and $z_{\bar u,v}$ we get
\[
\left\{\begin{array}{l}\displaystyle \frac{\partial w}{\partial t} + Aw + f'(y_u)w = [f'(\bar y) - f'(y_u)]z_{\bar u,v} \text{ in } Q,\\
\partial_{n_A}w = 0 \ \text{ on } \Sigma,\ \ w(0) = 0 \ \text{ in } \Omega.\end{array}\right.
\]
From \eqref{E2.9} and \eqref{E2.6} we deduce with the mean value theorem that
\[
\|w\|_{L^2(Q)} \le M_3\|f'(\bar y) - f'(y_u)\|_{L^\infty(Q)}\|z_{\bar u,v}\|_{L^2(Q)} \le M_3M_f\|y_u - \bar y\|_{L^\infty(Q)}\|z_{\bar u,v}\|_{L^2(Q)}.
\]

Now, taking $w = y_u - \bar y - z_{\bar u,u - \bar u}$ we get
\[
\left\{\begin{array}{l}\displaystyle \frac{\partial w}{\partial t} + Aw + f'(\bar y)w = -\frac{1}{2}f''(\hat y)(y_u - \bar y)^2 \text{ in } Q,\\
\partial_{n_A}w = 0 \ \text{ on } \Sigma,\ \ w(0) = 0 \ \text{ in } \Omega,\end{array}\right.
\]
where $\hat y = \bar y + \theta(y_u - \bar y)$ for some measurable function $\theta: Q \longrightarrow [0,1]$. Using again \eqref{E2.9} we obtain 
\[
\|w\|_{L^2(Q)} \le \frac{M_3}{2}\|f''(\hat y)(y_u - \bar y)^2\|_{L^2(Q)} \le \frac{M_3}{2}M_f\|y_u - \bar y\|_{L^\infty(Q)}\|y_u - \bar y\|_{L^2(Q)}.
\]
which proves \eqref{EA.2}. To establish \eqref{EA.3} we apply \cite{Casas-Kunisch2025} and use \eqref{E2.6}  to the above equation to get
\begin{align*}
&\|w\|_{L^\infty(Q)} \le \frac{C}{2}\Big(\|f''(\hat y)(y_u - \bar y)^2\|_{L^2(Q)} + \|f''(\hat y)(y_u - \bar y)^2\|_{L^\infty(Q)}\Big)\\
&\le \frac{C}{2}\Big(M_f\|y_u - \bar y\|_{L^2(Q)}\|y_u - \bar y\|_{L^\infty(Q)} + M_f\|y_u - \bar y\|^2_{L^\infty(Q)}\Big)\\
& \le [CM_{ad}M_f(M_3 + 1)]\|y_u - \bar y\|_{L^\infty(Q)},
\end{align*}
which proves \eqref{EA.3} with $K_{ad,\infty} = CM_{ad}M_f(M_3 + 1)$.
\end{proof}

\begin{lemma}
The following properties hold for all $u \in \uad^j$, $j\in\{1,2\}$, and all $v \in L^2(Q_\omega)$
\begin{align}
&\|z_{u,v}\|_{L^2(Q)} \le (1 + 2M_3M_fM_{ad})\|z_{\bar u,v}\|_{L^2(Q)},\label{EA.4}\\
&\|z_{\bar u,u - \bar u}\|_{L^2(Q)} \le (1 + M_3M_fM_{ad})\|y_u - \bar y\|_{L^2(Q)},\label{EA.5}\\
&\|z_{\bar u,u - \bar u}\|_{L^\infty(Q)} \le (1 + K_{ad,\infty})\|y_u - \bar y\|_{L^\infty(Q)},\label{EA.6}\\
&\|y_u - \bar y\|_{L^2(Q)} \le 2\|z_{\bar u,u - \bar u}\|_{L^2(Q)}\quad \text{if}\quad \|y_u - \bar y\|_{L^\infty(Q)} \le \frac{1}{M_3M_f}.\label{EA.7}
\end{align}
\label{LA.21}
\end{lemma}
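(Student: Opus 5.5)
All four estimates follow from the preceding Lemma together with the triangle inequality and the bound \eqref{E2.6}, $\|y_u\|_{L^\infty(Q)} + \|\bar y\|_{L^\infty(Q)} \le 2M_{ad}$, which gives $\|y_u - \bar y\|_{L^\infty(Q)} \le 2M_{ad}$. The plan is to run through them in order; no new PDE estimate is needed.

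For \eqref{EA.4}, write $z_{u,v} = z_{\bar u,v} + (z_{u,v} - z_{\bar u,v})$ and apply \eqref{EA.1}. This gives
\[
\|z_{u,v}\|_{L^2(Q)} \le \big(1 + M_3M_f\|y_u - \bar y\|_{L^\infty(Q)}\big)\|z_{\bar u,v}\|_{L^2(Q)}.
\]
Inserting $\|y_u - \bar y\|_{L^\infty(Q)} \le 2M_{ad}$ yields the constant $1 + 2M_3M_fM_{ad}$.

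For \eqref{EA.5}, write $z_{\bar u,u-\bar u} = (y_u - \bar y) - (y_u - \bar y - z_{\bar u,u-\bar u})$ and use \eqref{EA.2}:
\[
\|z_{\bar u,u-\bar u}\|_{L^2(Q)} \le \Big(1 + \tfrac{M_3M_f}{2}\|y_u - \bar y\|_{L^\infty(Q)}\Big)\|y_u - \bar y\|_{L^2(Q)}.
\]
The same $L^\infty$ bound $2M_{ad}$ turns the factor into $1 + M_3M_fM_{ad}$. Here $\|y_u - \bar y\|_{L^2(Q)} < \infty$ because both states lie in $W(0,\infty)$. The identical splitting combined with \eqref{EA.3} gives \eqref{EA.6}:
\[
\|z_{\bar u,u-\bar u}\|_{L^\infty(Q)} \le \|y_u - \bar y\|_{L^\infty(Q)} + K_{ad,\infty}\|y_u - \bar y\|_{L^\infty(Q)}.
\]

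For \eqref{EA.7}, use the reverse splitting $y_u - \bar y = z_{\bar u,u-\bar u} + (y_u - \bar y - z_{\bar u,u-\bar u})$ and \eqref{EA.2}:
\[
\|y_u - \bar y\|_{L^2(Q)} \le \|z_{\bar u,u-\bar u}\|_{L^2(Q)} + \tfrac{M_3M_f}{2}\|y_u - \bar y\|_{L^\infty(Q)}\|y_u - \bar y\|_{L^2(Q)}.
\]
Under the hypothesis $\|y_u - \bar y\|_{L^\infty(Q)} \le 1/(M_3M_f)$, the last coefficient is at most $\tfrac12$. That term can then be absorbed into the left-hand side, since the left-hand side is finite, giving $\tfrac12\|y_u - \bar y\|_{L^2(Q)} \le \|z_{\bar u,u-\bar u}\|_{L^2(Q)}$. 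If $M_f = 0$, the remainder vanishes identically and the bound is trivial.

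None of the steps is really an obstacle. The only point requiring care is the absorption in \eqref{EA.7}, which needs the finiteness of $\|y_u - \bar y\|_{L^2(Q)}$; this is guaranteed by Theorem \ref{T2.1}.
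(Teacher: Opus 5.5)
Your proposal is correct and follows the paper's route: the paper also derives \eqref{EA.4} from \eqref{EA.1}, \eqref{EA.5} and \eqref{EA.7} from \eqref{EA.2}, and \eqref{EA.6} from \eqref{EA.3}, using the triangle inequality and the bound $\|y_u-\bar y\|_{L^\infty(Q)}\le 2M_{ad}$ from \eqref{E2.6}. Your write-up additionally supplies the details the paper leaves implicit, notably the absorption step in \eqref{EA.7} and the finiteness of $\|y_u-\bar y\|_{L^2(Q)}$ that it requires.
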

This lemma is an immediate consequence of Lemma \ref{LA.1}. In fact, \eqref{EA.4} follows from \eqref{EA.1}, \eqref{EA.5} and \eqref{EA.7} are deduced from \eqref{EA.2}, and \eqref{EA.6} follows from \eqref{EA.3}.

\begin{lemma}
For every $\rho > 0$ there exists $\varepsilon_\rho > 0$ such that
\begin{equation}
\|y_u - \bar y\|_{L^2(Q)} \le \rho\quad \forall u \in \uad^j, j \in \{1, 2\},  \text{ such that } \|y_u - \bar y\|_{L^\infty(Q)} \le \varepsilon_\rho.
\label{EA.8}
\end{equation}
\label{LA.3}
\end{lemma}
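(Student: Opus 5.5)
The strategy is to argue by contradiction, using the compactness provided by Theorem \ref{T2.1}. Suppose the statement fails for some $\rho > 0$ and some $j \in \{1,2\}$. Then there is a sequence $\{u_k\}_{k=1}^\infty \subset \uad^j$ with
\[
\|y_{u_k} - \bar y\|_{L^\infty(Q)} \le \frac{1}{k}
\quad\text{but}\quad
\|y_{u_k} - \bar y\|_{L^2(Q)} > \rho \quad \text{for all } k.
\]
We will show that $\|y_{u_k} - \bar y\|_{L^2(Q)} \to 0$, which contradicts this.

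\textbf{Step 1 (weak limit).} By Assumption \ref{A3}, $\uad^j$ is bounded in $L^2(Q_\omega)$; it is also closed and convex, hence weakly closed. We extract a subsequence $u_k \rightharpoonup u$ in $L^2(Q_\omega)$ with $u \in \uad^j$. Theorem \ref{T2.1} then gives $\|y_{u_k} - y_u\|_{L^\infty(Q)} \to 0$. Combined with $\|y_{u_k} - \bar y\|_{L^\infty(Q)} \to 0$, this yields $y_u = \bar y$.

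\textbf{Step 2 (split the time interval).} For any $T < \infty$,
\[
\|y_{u_k} - \bar y\|^2_{L^2(Q)} \le |\Omega|\,T\,\|y_{u_k} - \bar y\|^2_{L^\infty(Q)} + \|y_{u_k} - \bar y\|^2_{L^2(\Omega \times (T,\infty))}.
\]
The first term tends to zero for each fixed $T$. So the whole difficulty lies in the tail, and we need a bound on it that is uniform in $k$.

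\textbf{Step 3 (tail estimate), the main obstacle.} Set $z_k = y_{u_k} - \bar y$. As in the proof of Theorem \ref{T2.1}, $z_k$ solves on $\Omega \times (T,\infty)$ a linear equation with potential $f'(\hat y_k) \ge 0$, right-hand side $(u_k - \bar u)\chi_\omega$, and initial value $z_k(T)$. Applying \eqref{E2.4}, or equivalently the estimate \cite[Theorem A.3]{Casas-Kunisch2023C} shifted to the interval $(T,\infty)$, gives
\[
\|z_k\|_{L^2(\Omega \times (T,\infty))} \le M_1\Big(\|z_k(T)\|_{L^2(\Omega)} + 2\|h\|_{L^2(T,\infty)}\Big).
\]
Here $h$ is the dominating function from the proof of Theorem \ref{T2.1}: $h = \gamma$ for $j = 1$ and $h = \max\{|\alpha|,\beta\}$ for $j = 2$, so that $\|v(t)\|_{L^2(\omega)} \le h(t)$ for all $v \in \uad^j$ and a.a.\ $t$, and $h \in L^2(0,\infty)$. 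The key point to check is that $M_1$ is independent of $T$ and of the potential; this holds because $f' \ge 0$ and the coercivity \eqref{E2.2} depend on neither. We also have $\|z_k(T)\|_{L^2(\Omega)} \le |\Omega|^{1/2}\|z_k\|_{L^\infty(Q)} \le |\Omega|^{1/2}/k$.

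\textbf{Conclusion.} Given $\eta > 0$, first choose $T$ so large that $2M_1\|h\|_{L^2(T,\infty)} < \eta$. Then choose $k$ large enough that both the bounded-interval term from Step 2 and the term $M_1|\Omega|^{1/2}/k$ are small. This gives $\|y_{u_k} - \bar y\|_{L^2(Q)} \to 0$, contradicting $\|y_{u_k} - \bar y\|_{L^2(Q)} > \rho$.

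The argument can also be run directly, without contradiction: taking $\varepsilon_\rho$ explicitly from the inequalities in Steps 2 and 3 (first $T$, then $\varepsilon$) yields the claim constructively. In that version Step 1 is not even needed, since Steps 2 and 3 use only $\|z\|_{L^\infty(Q)} \le \varepsilon$ and the uniform tail bound $h$.
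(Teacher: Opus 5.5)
Your proof is correct. In the direct form you sketch at the end, it is exactly the paper's argument: pick $T$ with $\|h\|_{L^2(T,\infty)}$ small, bound $\|y_u-\bar y\|_{L^2(Q_T)}\le\sqrt{|\Omega|T}\,\varepsilon$, and control the tail by the linear energy estimate on $(T,\infty)$ with initial datum $(y_u-\bar y)(T)$; the contradiction wrapper and Step 1 are not needed. Your check that the energy constant does not depend on $T$ or on the nonnegative potential is worth keeping, since the paper skips it when it invokes \eqref{E2.9}, which is stated only for zero initial data.
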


\begin{proof}
We set $w = y_u - \bar y$. Then, we have
\[
\left\{\begin{array}{l}\displaystyle \frac{\partial w}{\partial t} + Aw + f'(\bar y + \theta(y_u - \bar y))w = (u - \bar u)\chi_\omega \text{ in } Q,\\
\partial_{n_A}w = 0 \ \text{ on } \Sigma,\ \ w(0) = 0 \ \text{ in } \Omega.\end{array}\right.
\]
As in the proof of Theorem \ref{T2.1}, we take the function $h \in L^2(0,\infty) \cap L^\infty(0,\infty)$ such that $\|v(t)\|_{L^2(\omega)} \le h(t)$ for almost all $t \in (0,\infty)$ and all $v \in \uad^j$. Then, for every $T < \infty$ we deduce with \eqref{E2.9} that
\begin{align*}
\|w\|_{L^2(Q^T)} &\le M_3\Big(\|u - \bar u\|_{L^2(\omega \times (T,\infty))} + \|w(T)\|_{L^2(\Omega)}\Big)\\
&\le M_3\Big(2\|h\|_{L^2(\omega \times (T,\infty))} + \sqrt{|\Omega|}\|y_u - \bar y\|_{L^\infty(Q)}\Big).
\end{align*}
We select $T_\rho < \infty$ such that $M_32\|h\|_{L^2(\omega \times (T,\infty))} < \frac{\rho}{2}$. We also take $\varepsilon_\rho > 0$ such that $(\sqrt{T_\rho} + M_3)\sqrt{|\Omega|}\varepsilon < \frac{\rho}{2}$. Then, we get
\[
\|y_u - \bar y\|_{L^2(Q)} \le \|y_u - \bar y\|_{L^2(Q_{T_\rho})} + \|y_u - \bar y\|_{L^2(Q^{T_\rho})} \le \rho
\]
for all $u \in \uad^j$, $j \in \{1, 2\}$,  such that $\|y_u - \bar y\|_{L^\infty(Q)} \le \varepsilon_\rho$.
\end{proof}

\begin{lemma}\label{LA.4}
For every $\rho  > 0$ there exists $\varepsilon_\rho > 0$ such that $\|\varphi_u -\bar\varphi\|_{L^\infty(Q)} \le \rho$ for all $u \in \uad^i$, $i\in \{1,2\}$, satisfying $\|y_u - \bar y\|_{L^\infty(Q)} \le \varepsilon_\rho$.
\end{lemma}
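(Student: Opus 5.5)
We set $\psi = \varphi_u - \bar\varphi$ and subtract the adjoint equations \eqref{E2.12} for $u$ and $\bar u$. This gives the backward linear problem
\[
-\frac{\partial\psi}{\partial t} + A^*\psi + f'(y_u)\psi = (y_u - \bar y) - [f'(y_u) - f'(\bar y)]\bar\varphi \ \text{ in } Q,\quad \partial_{n_{A^*}}\psi = 0,\quad \lim_{T\to\infty}\|\psi(T)\|_{L^2(\Omega)} = 0.
\]
The right-hand side $F$ is bounded pointwise by $(1 + M_f K^j)|y_u - \bar y|$, using \eqref{E2.13} and the mean value theorem with $M_f$ from Lemma \ref{LA.1}. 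The plan is to apply to this equation the $L^\infty$ estimate for the adjoint-type linear parabolic equation on the infinite horizon, \cite[Theorem A.4]{Casas-Kunisch2023C}, which gives
\[
\|\psi\|_{L^\infty(Q)} \le C\big(\|F\|_{L^2(Q)} + \|F\|_{L^p(0,\infty;L^2(\Omega))}\big),
\]
with $C$ independent of $u \in \uad^j$. This works because $f'(y_u)\ge 0$ and $\|y_u\|_{L^\infty(Q)} \le M_{ad}$ by \eqref{E2.6}.

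Next we bound the two norms of $F$ by $\|y_u - \bar y\|_{L^2(Q)}$ and $\|y_u - \bar y\|_{L^\infty(Q)}$. For the $L^2(Q)$ part we have $\|F\|_{L^2(Q)} \le (1+M_fK^j)\|y_u - \bar y\|_{L^2(Q)}$. For the $L^p(0,\infty;L^2(\Omega))$ part we interpolate, exactly as in the proof of Theorem \ref{T2.1}:
\[
\|F\|_{L^p(0,\infty;L^2(\Omega))} \le \|F\|^{\frac{p-2}{p}}_{L^\infty(0,\infty;L^2(\Omega))}\|F\|^{\frac 2p}_{L^2(Q)} \le (1+M_fK^j)\,|\Omega|^{\frac{p-2}{2p}}\|y_u - \bar y\|^{\frac{p-2}{p}}_{L^\infty(Q)}\|y_u - \bar y\|^{\frac 2p}_{L^2(Q)}.
\]
So both terms are controlled once $\|y_u-\bar y\|_{L^2(Q)}$ is small and $\|y_u - \bar y\|_{L^\infty(Q)}$ is bounded (the latter is at most $2M_{ad}$).

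Finally we invoke Lemma \ref{LA.3}. Given $\rho>0$, choose $\rho'>0$ so that $C(1+M_fK^j)\big(\rho' + |\Omega|^{\frac{p-2}{2p}}(2M_{ad})^{\frac{p-2}{p}}\rho'^{\frac 2p}\big) \le \rho$. Lemma \ref{LA.3} then provides $\varepsilon_{\rho'}$ such that $\|y_u-\bar y\|_{L^2(Q)}\le\rho'$ whenever $\|y_u - \bar y\|_{L^\infty(Q)}\le\varepsilon_{\rho'}$. Taking $\varepsilon_\rho = \varepsilon_{\rho'}$ proves the lemma.

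The main obstacle is the first step: we need an $L^\infty(Q)$ bound for the backward equation on the infinite horizon with a constant uniform in the potential $f'(y_u)$. The estimate cannot be done on a finite horizon alone, because $L^\infty$ smallness of $y_u - \bar y$ does not give integrability in time; this is exactly why Lemma \ref{LA.3} is needed for the $L^2$ tail. If \cite[Theorem A.4]{Casas-Kunisch2023C} is not stated in this form, we would use the time reversal $t\mapsto T-t$ on $(0,T)$ to turn the problem into a forward equation. We would then apply the forward estimate \eqref{E2.5}-type bound from \cite{Casas-Kunisch2025}, with constants independent of $T$ thanks to the coercivity \eqref{E2.2}. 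Finally we would let $T\to\infty$, using $\|\psi(T)\|_{L^2(\Omega)}\to0$ together with a smoothing $L^2\to L^\infty$ step to handle the terminal datum.
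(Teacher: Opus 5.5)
Your proposal is correct and follows essentially the paper's route: you subtract the adjoint equations, apply the $L^\infty(Q)$ estimate of \cite[Theorem A.4]{Casas-Kunisch2023C} in terms of the $L^2(Q)$ and $L^p(0,\infty;L^2(\Omega))$ norms of the right-hand side, interpolate the latter norm between $L^\infty(0,\infty;L^2(\Omega))$ and $L^2(Q)$, and use Lemma \ref{LA.3} to make $\|y_u-\bar y\|_{L^2(Q)}$ small. The differences from the paper are minor: you keep $f'(y_u)$ as the potential with $\bar\varphi$ in the coupling term, whereas the paper keeps $f'(\bar y)$ with $\varphi_u$; and in the interpolation you make the $L^2(Q)$ factor small rather than the $L^\infty$ factor, which incidentally also covers the borderline case $p=2$.
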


\begin{proof}
Let $u \in \uad^i$ be fixed and set $\phi := \varphi_u - \bar\varphi$. For the function $\phi$, we obtain
\[
\left\{\begin{array}{l}
\displaystyle - \frac{\partial\phi}{\partial t} + A\phi + f'(\bar y)\phi =  y_u - \bar y + [f'(\bar y) - f'(y_u)]\,\varphi_u\ \text{ in } Q,\\
\partial_{n_A}\phi = 0 \ \text{ on } \Sigma,\ \lim_{T\to\infty}\|\phi(T)\|_{L^2(\Omega)} = 0.
\end{array}\right.
\]
By the mean value theorem, \eqref{E2.6}, and recalling the definition of $M_f$ in Lemma \ref{LA.1} we get
\[
\|f'(\bar y) - f'(y_u)\|_{L^\infty(Q)} \le M_f\|y_u - \bar y\|_{L^\infty(Q)} .
\]
Then, from \cite[Theorem A.4]{Casas-Kunisch2023C} and \eqref{E2.13} we obtain
\begin{align*}
&\|\phi\|_{L^\infty(Q)} \le C\Big(\|y_u - \bar y\|_{L^2(Q)} + \|y_u - \bar y\|_{ L^p(0,\infty;L^2(\Omega))}\\
& + \|[f'(\bar y) - f'(y_u)]\varphi_u\|_{L^2(Q)} +  \|[f'(\bar y) - f'(y_u)]\varphi_u\|_{ L^p(0,\infty;L^2(\Omega))}\Big)\\
&\le C\Big(\|y_u - \bar y\|_{L^2(Q)} + \|y_u - \bar y\|_{L^\infty(0,\infty;L^2(\Omega))}^{\frac{p - 2}{p}}\|y_u - \bar y\|^{\frac{2}{p}}_{L^2(Q)}\\
&+ M_f\|y_u - \bar y\|_{L^\infty(Q)}\|\varphi_u\|_{L^2(Q)} + M_f\|y_u - \bar y\|_{L^\infty(Q)}\|\varphi_u\|_{L^p(0,\infty;L^2(\Omega))}\Big)\\
&\le C\Big(\|y_u - \bar y\|_{L^2(Q)} + [\sqrt{|\Omega|}\|y_u - \bar y\|_{L^\infty(Q)}]^{\frac{p - 2}{p}}[2M_{ad}]^{\frac{2}{p}}\\
&\hspace{3.1cm} + 2(1 + |\Omega|)^{\frac{p -2}{2p}}M_fK^j\|y_u - \bar y\|_{L^\infty(Q)}\Big)\\
&\le C\Big(\|y_u - \bar y\|_{L^2(Q)} + [\sqrt{|\Omega|}\varepsilon]^{\frac{p - 2}{p}}[2M_{ad}]^{\frac{2}{p}} + 2(1 + |\Omega|)^{\frac{p -2}{2p}}M_fK^j\varepsilon\Big).
\end{align*}
From Lemma \ref{LA.3} we infer the existence of $\varepsilon_1 > 0$ such that $\|y_u - \bar y\|_{L^2(Q)} \le \frac{\rho}{2C}$ if $\|y_u - \bar y\|_{L^\infty(Q)} \le \varepsilon_1$. We also select $\varepsilon_2 > 0$ such that
\[
C\Big([\sqrt{|\Omega|}\varepsilon_2]^{\frac{p - 2}{p}}[2M_{ad}]^{\frac{2}{p}} + 2(1 + |\Omega|)^{\frac{p -2}{2p}}M_fK^j\varepsilon_2\Big) \le \frac{\rho}{2}.
\]
Then, it is enough to take $\varepsilon_\rho = \min\{\varepsilon_1,\varepsilon_2\}$ to get that $\|\varphi_u -\bar\varphi\|_{L^\infty(Q)} \le \rho$.
\end{proof}

\begin{lemma}\label{LA.5}
For every $\rho > 0$ there exists $T_\rho \in (0,\infty)$ such that $\|\bar\varphi\|_{L^\infty(Q^{T_\rho})} \le \rho$.
\end{lemma}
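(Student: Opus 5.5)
We need to show $\|\bar\varphi\|_{L^\infty(\Omega\times(T,\infty))}\to 0$ as $T\to\infty$. The plan is to split $\bar\varphi$ into a part driven by the data on $(T,\infty)$ and a part carrying the information from later times, and to estimate each with the $L^\infty$ bound from \cite[Theorem A.4]{Casas-Kunisch2023C}. That bound is the one already used in the proof of Lemma \ref{LA.4}; it controls $\|\phi\|_{L^\infty}$ of a backward solution by the $L^2(Q)$ and $L^p(0,\infty;L^2(\Omega))$ norms of the right-hand side.

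\emph{Step 1 (data tail).} The right-hand side of \eqref{E2.15} is $\bar y-y_d$. Both terms belong to $L^2(Q)\cap L^p(0,\infty;L^2(\Omega))$: for $\bar y$ this follows from Theorem \ref{T2.1} by interpolating between $L^2(Q)$ and $L^\infty(Q)$, and for $y_d$ it is Assumption \ref{A3}. Since $p<\infty$, the tails $\|\bar y-y_d\|_{L^2(\Omega\times(S,\infty))}$ and $\|\bar y-y_d\|_{L^p(S,\infty;L^2(\Omega))}$ tend to $0$ as $S\to\infty$, by dominated convergence.

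\emph{Step 2 (splitting on $(S,\infty)$).} On $\Omega\times(S,\infty)$ the function $\bar\varphi$ solves the backward equation \eqref{E2.15} with $\lim_{T\to\infty}\|\bar\varphi(T)\|_{L^2(\Omega)}=0$. The infinite-horizon problem has no terminal datum, so this backward problem restricted to $(S,\infty)$ is again of the type covered by \cite[Theorem A.4]{Casas-Kunisch2023C}, after the time shift $t\mapsto t-S$. The shift leaves the constant unchanged, because $f'(\bar y)\ge 0$ and the constant of that theorem depends only on the structural data and on the uniform bound \eqref{E2.6}. We therefore get
\[
\|\bar\varphi\|_{L^\infty(\Omega\times(S,\infty))}\le C\Big(\|\bar y-y_d\|_{L^2(\Omega\times(S,\infty))}+\|\bar y-y_d\|_{L^p(S,\infty;L^2(\Omega))}\Big),
\]
with $C$ independent of $S$. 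Given $\rho>0$, Step 1 lets us choose $T_\rho=S$ so large that the right-hand side is at most $\rho$, which is the claim.

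\emph{Main obstacle.} The delicate point is justifying that the $L^\infty$ estimate of \cite[Theorem A.4]{Casas-Kunisch2023C} holds on the shifted half-line with a constant uniform in $S$. I would argue this via uniqueness: the restriction of $\bar\varphi$ to $(S,\infty)$ is the unique solution with vanishing limit of the shifted problem, whose coefficient $f'(\bar y(\cdot+S))\ge0$ is bounded by $\sup_{|s|\le M_{ad}}|f'(s)|$.

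If that direct application is not available, I would fall back on two steps. First, the energy estimate for the backward equation, using \eqref{E2.2}, $f'\ge0$, and $\|\bar\varphi(T)\|_{L^2(\Omega)}\to0$, gives
\[
\sup_{t\ge S}\|\bar\varphi(t)\|_{L^2(\Omega)}\le C\|\bar y-y_d\|_{L^2(\Omega\times(S,\infty))}.
\]
Second, on each unit interval $(t,t+1)$ the local $L^2\to L^\infty$ parabolic estimate of \cite{DER2017} or \cite[\S III.10]{Lad-Sol-Ura68} holds with the terminal value $\bar\varphi(t+1)$; because $p>4/(4-n)$, its constant is uniform in $t$. Combining the two gives the same conclusion.
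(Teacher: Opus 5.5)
Your argument is correct. It differs from the paper's only in how the tail $\Omega\times(T,\infty)$ is isolated.

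\textbf{The paper's route.} The paper multiplies $\bar\varphi$ by a cutoff $\eta_T$ (zero before $T-1$, linear on $[T-1,T]$, one after $T$). Then $\eta_T\bar\varphi$ solves the adjoint equation on all of $Q$ with the \emph{same} coefficient $f'(\bar y)$ and right-hand side $\eta_T(\bar y-y_d)-\eta_T'\bar\varphi$. So \cite[Theorem A.4]{Casas-Kunisch2023C} applies verbatim, and its constant is trivially independent of $T$. The price is the extra term $\eta_T'\bar\varphi$ supported in $(T-1,T)$. Its norms are shown to vanish using $\bar\varphi\in L^2(Q)$ and $\|\bar\varphi(t)\|_{L^2(\Omega)}\to 0$.

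\textbf{Your route.} You use the fact that a backward problem on $(S,\infty)$ needs no datum at $t=S$. Restriction plus time translation therefore produces no extra term, and you get a bound purely in terms of the tail of $\bar y-y_d$. The price is that the constant of Theorem A.4 must be uniform over the translated coefficients $f'(\bar y(\cdot+S))$. That uniformity is legitimate: it is exactly what the paper uses to obtain \eqref{E2.13} uniformly over $u\in\uad^j$, where the coefficient $f'(y_u)$ varies with $u$. Your uniqueness remark correctly identifies the restricted $\bar\varphi$ with the solution the theorem furnishes for the shifted problem.

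\textbf{The fallback.} It also works, with one small correction. With only $L^2$ terminal data $\bar\varphi(t+1)$, the local $L^2\to L^\infty$ smoothing estimate bounds $\bar\varphi$ in $L^\infty$ on a subinterval away from the terminal time, such as $(t,t+\tfrac12)$, not up to $t+1$. These subintervals still cover $(S,\infty)$, so the conclusion is unaffected.
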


\begin{proof}
Recall that the adjoint state satisfies $\lim_{t\to\infty}\|\bar\varphi(t)\|_{L^2(\Omega)} = 0$. Fix $T>1$ and define $\eta_T:(0,\infty)\to\mathbb R$ by
\[
\eta_T(t) :=
\begin{cases}
0, & 0<t<T-1,\\
t+1-T, & T-1 \le t \le T,\\
1, & t>T.
\end{cases}
\]
Set $\phi_T(t,x) := \eta_T(t)\,\bar\varphi(t,x)$.
Then $\phi_T$ satisfies $
\phi_T(t,x) = 0$ for $t<T-1$, $\phi_T(t,x) = \bar\varphi(t,x)$ for $ t>T$,
and $\|\bar\varphi\|_{L^\infty(Q^T)} \le \|\phi_T\|_{L^\infty(Q)}$. Further we have
\[
\left\{
\begin{array}{l}
- \partial_t\phi_T + A\phi_T + f'(\bar y)\phi_T = h_T \ \text{ in } Q,\\
\partial_{n_A}\phi_T = 0 \text{ on } \Sigma,\ \lim_{t\to\infty}\|\phi_T(t)\|_{L^2(\Omega)} = 0,
\end{array}\right.
\]
where $h_T(t,x) = \eta_T(t)(\bar y - y_d)(t,x) - \eta_T'(t)\,\bar\varphi(t,x)$.
Here $\eta_T'$ is equal to $1$ on $(T-1,T)$ and $0$ elsewhere. From \cite[Theorem A.4]{Casas-Kunisch2023C} we deduce the existence of a constant $C$ independent of $T$ such that
\begin{align*}
\|\phi_T\|_{L^\infty(Q)} &\le C\Big( \|h_T\|_{L^2(Q)}  + \|h_T\|_{ L^p(0,\infty;L^2(\Omega))} \Big)\\
&\le C\Big(\|\bar y - y_d\|_{L^2(Q^{T - 1})} + \|\bar y - y_d\|_{L^p(T - 1,\infty;L^2(\Omega))}\\
&+ \|\bar\varphi\|_{L^2(\Omega \times (T-1,T)} + \|\bar\varphi\|_{L^p(T-1,T;L^2(\Omega))}\Big) \stackrel{T \to \infty}{\longrightarrow} 0,
\end{align*}
which proves the statement.
\end{proof}

\begin{lemma}\label{LA.6}
The second derivative $J''$ enjoys the following properties:

1.- For $M_J = 1 +K^jM_f$ we have
\begin{equation}
|J''(u)(v_1,v_2)| \le M_J\|z_{u,v_1}\|_{L^2(Q)}\|z_{u,v_2}\|_{L^2(Q)}\ \forall v_1,v_2 \in L^2(Q_\omega).
\label{EA.9}
\end{equation}

 2.- For every $\rho > 0$ there exists $\varepsilon_\rho > 0$ such that for all $u \in \uad^j$, $j\in\{1,2\}$, with $\|y_u - \bar y\|_{L^\infty(Q)} < \varepsilon_\rho$ we have that
\begin{equation}
|[J''(u) - J''(\bar u)]v^2| \le \rho\|z_{\bar u,v}\|^2_{L^2(Q)}\quad \forall v \in L^2(Q_\omega).
\label{EA.10}
\end{equation}
\end{lemma}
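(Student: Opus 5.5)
Both estimates follow from the formula \eqref{E2.11} together with the uniform bounds \eqref{E2.6} and \eqref{E2.13}, and from Lemmas \ref{LA.1}, \ref{LA.21} and \ref{LA.4}.

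\emph{Part 1.} By \eqref{E2.11},
\[
|J''(u)(v_1,v_2)| \le \|1 - \varphi_u f''(y_u)\|_{L^\infty(Q)}\|z_{u,v_1}\|_{L^2(Q)}\|z_{u,v_2}\|_{L^2(Q)}.
\]
By \eqref{E2.6}, $\|y_u\|_{L^\infty(Q)} \le M_{ad}$, so $|f''(y_u)| \le M_f$. By \eqref{E2.13}, $\|\varphi_u\|_{L^\infty(Q)} \le K^j$. Hence the $L^\infty$ factor is at most $1 + K^jM_f = M_J$, and Cauchy--Schwarz gives \eqref{EA.9}.

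\emph{Part 2.} Set $a_u = 1 - \varphi_u f''(y_u)$ and $\bar a = 1 - \bar\varphi f''(\bar y)$. Split
\[
[J''(u) - J''(\bar u)]v^2 = \int_Q (a_u - \bar a)\, z_{u,v}^2 \dx\dt + \int_Q \bar a\,\big(z_{u,v}^2 - z_{\bar u,v}^2\big)\dx\dt .
\]

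For the first integral, write
\[
a_u - \bar a = -(\varphi_u - \bar\varphi) f''(y_u) - \bar\varphi\,\big(f''(y_u) - f''(\bar y)\big).
\]
The first piece is bounded by $M_f\|\varphi_u - \bar\varphi\|_{L^\infty(Q)}$, which Lemma \ref{LA.4} makes as small as we like once $\|y_u - \bar y\|_{L^\infty(Q)}$ is small. For the second piece, $f''$ is uniformly continuous on $[-M_{ad},M_{ad}]$ and $\|\bar\varphi\|_{L^\infty(Q)} \le K^j$, so it is also small in $L^\infty(Q)$ for $\|y_u - \bar y\|_{L^\infty(Q)}$ small. Finally, \eqref{EA.4} gives $\|z_{u,v}\|_{L^2(Q)}^2 \le C_{ad}^2\|z_{\bar u,v}\|_{L^2(Q)}^2$. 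So the first integral is bounded by $\sigma(\varepsilon)\|z_{\bar u,v}\|_{L^2(Q)}^2$ with $\sigma(\varepsilon)\to 0$ as $\varepsilon \to 0$.

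For the second integral, use $z_{u,v}^2 - z_{\bar u,v}^2 = (z_{u,v} - z_{\bar u,v})(z_{u,v} + z_{\bar u,v})$ and $\|\bar a\|_{L^\infty(Q)} \le M_J$. Combining \eqref{EA.1} with \eqref{EA.4} bounds it by
\[
M_J\,M_3M_f\,\|y_u - \bar y\|_{L^\infty(Q)}\,(1 + C_{ad})\,\|z_{\bar u,v}\|_{L^2(Q)}^2 .
\]

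Choosing $\varepsilon_\rho$ so that both bounds total at most $\rho$ proves \eqref{EA.10}. These bounds hold for all $v \in L^2(Q_\omega)$, since $z_{u,v}\in L^2(Q)$ by \eqref{E2.9}.

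The only delicate point is the convergence $\varphi_u \to \bar\varphi$ in $L^\infty(Q)$ on the infinite horizon. This is exactly what Lemma \ref{LA.4} provides, and it in turn relies on Lemma \ref{LA.3} to convert $L^\infty$ smallness of $y_u - \bar y$ into $L^2(Q)$ smallness. Everything else is routine.
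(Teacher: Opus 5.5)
Your proof is correct and follows essentially the paper's argument. Part 1 uses the same bounds \eqref{E2.6} and \eqref{E2.13}. Part 2 uses the same ingredients: Lemma \ref{LA.4}, uniform continuity of $f''$ on $[-M_{ad},M_{ad}]$, and \eqref{EA.1} with \eqref{EA.4}. The only difference is cosmetic: you merge the paper's terms $I_1$ and $I_4$ into a single term with weight $\bar a = 1-\bar\varphi f''(\bar y)$, and your grouping also gets the sign of that term right.
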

\begin{proof}
The inequality \eqref{EA.9} is a straightforward consequence of \eqref{E2.11}, \eqref{E2.15}, and the definition of $M_f$. Let us prove \eqref{EA.10}. From \eqref{E2.11} we get
\begin{align*}
&[J''(u) - J''(\bar u)]v^2 = \int_Q(z^2_{u,v} - z^2_{\bar u,v})\dx\dt + \int_Q(\bar\varphi - \varphi_u)f''(y_u)z^2_{u,v}\dx\dt\\
& + \int_Q\bar\varphi(f''(\bar y) - f''(y_u))z^2_{u,v}\dx\dt + \int_Q\bar\varphi f''(\bar y)(z^2_{u,v} - z^2_{\bar u,v})\dx\dt\\
& = I_1 + I_2 + I_3 + I_4.
\end{align*}
To estimate $I_1$ we select $0 < \varepsilon_1 \le [2M_3M_f(1 + M_3M_fM_{ad})]^{-1}\frac{\rho}{4}$ and for $\|y_u - \bar y\|_{L^\infty(Q)} < \varepsilon_1$ we  get with \eqref{EA.1} and \eqref{EA.4}
\[
|I_1| \le \|z_{u,v} - z_{\bar u,v}\|_{L^2(Q)}\|z_{u,v} + z_{\bar u,v}\|_{L^2(Q)} \le \frac{\rho}{4}\|z_{\bar u,v}\|^2_{L^2(Q)}.
\]
Let us estimate $I_2$. From Lemma \ref{LA.4} we get the existence of $\varepsilon_2 > 0$ such that
\[
\|\bar\varphi - \varphi_u\|_{L^\infty(Q)} \le \frac{\rho}{4(1 + 2M_3M_fM_{ad})^2M_f)}\quad \forall u \in \uad^j \text{ with } \|y_u - \bar y\|_{L^\infty(Q)} < \varepsilon_2.
\]
Then, using again \eqref{EA.4} we have
\[
|I_2| \le \|\bar\varphi - \varphi_u\|_{L^\infty(Q)}M_f\|z_{u,v}\|^2_{L^2(Q)}  \le \frac{\rho}{4}\|z_{\bar u,v}\|^2_{L^2(Q)}.
\]
For $I_3$ we select $\varepsilon_3 > 0$ such that for all $s_1, s_2 \in [-M_{ad},+M_{ad}]$ with $|s_2 - s_1| \le \varepsilon_3$
\[
|f''(s_2) - f''(s_1)| \le \frac{\rho}{4\|\bar\varphi\|_{L^\infty(Q)}(1 + 2M_3M_fM_{ad})^2}.
\]
Then, we get $|I_3| \le \|\bar\varphi\|_{L^\infty(Q)}\|f''(y_u) - f''(\bar y)\|_{L^\infty(Q)}\|z_{u,v}\|^2_{L^2(Q)} \le \frac{\rho}{4}\|z_{\bar u,v}\|^2_{L^2(Q)}$ for $\|y_u - \bar y\|_{L^\infty(Q)} \le \varepsilon_3$.

Finally, the estimate of $I_4$ follows in the same way as for the estimate of $I_1$ because $|I_4| \le \|\bar\varphi\|_{L^\infty(Q)}\|f''(\bar y)\|_{L^\infty(Q)}|I_1|$. It is enough to take $0 < \varepsilon_4 \le \frac{\varepsilon_1}{\|\bar\varphi\|_{L^\infty(Q)}\|f''(\bar y)\|_{L^\infty(Q)}}$.  Hence, the proof is concluded by setting $\varepsilon_\rho = \min \varepsilon_i$.
\end{proof}

\begin{lemma}\label{LA.7}
For every $\rho > 0$ there exists $\varepsilon_\rho > 0$ such that for all $u \in \uad^j$, $j\in \{1,2\}$, satisfying $\|y_u - \bar y\|_{L^\infty(Q)} \le \varepsilon_\rho$, the following inequality holds
\begin{equation}
\|y_{\bar u + \theta(u - \bar u)} - \bar y\|_{L^\infty(Q)} \le \rho \quad \forall \theta \in (0,1).
\label{EA.11}
\end{equation}
\end{lemma}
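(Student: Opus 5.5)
We set $u_\theta = \bar u + \theta(u - \bar u)$ and $w_\theta = y_{u_\theta} - \bar y$. Since $u\mapsto y_u$ is not affine, the task is to bound $w_\theta$ in $L^\infty(Q)$ by a quantity that is small when $\|y_u - \bar y\|_{L^\infty(Q)}$ is small, uniformly in $\theta\in(0,1)$. The natural comparison object is the linearized state $\theta z_{\bar u,u-\bar u}$. By linearity of \eqref{E2.7} we have $z_{\bar u,u_\theta - \bar u} = \theta z_{\bar u,u-\bar u}$, and \eqref{EA.6} gives
\[
\|\theta z_{\bar u,u-\bar u}\|_{L^\infty(Q)} \le (1 + K_{ad,\infty})\|y_u - \bar y\|_{L^\infty(Q)}.
\]
So it suffices to show that the remainder $r_\theta = w_\theta - \theta z_{\bar u,u-\bar u}$ is small in $L^\infty(Q)$.

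Subtracting the equations, $r_\theta$ satisfies $\partial_t r_\theta + Ar_\theta + f'(\bar y)r_\theta = -\frac12 f''(\hat y_\theta)w_\theta^2$ with zero initial datum, exactly as in the proof of \eqref{EA.3}. Applying \cite{Casas-Kunisch2025} as there yields
\[
\|r_\theta\|_{L^\infty(Q)} \le C M_f\big(\|w_\theta\|_{L^2(Q)} + \|w_\theta\|_{L^\infty(Q)}\big)\|w_\theta\|_{L^\infty(Q)}.
\]
By \eqref{E2.6} this is at most $C M_f (2M_{ad}+2M_{ad})\|w_\theta\|_{L^\infty(Q)}$, so the bound is only linear in $\|w_\theta\|_{L^\infty(Q)}$ and does not close by itself. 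This is the main obstacle: the a priori bound does not make the quadratic term small unless $w_\theta$ is already known to be small.

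I would overcome it by a continuity (bootstrap) argument in $\theta$. First I would show that $\theta\mapsto y_{u_\theta}$ is continuous from $[0,1]$ into $L^\infty(Q)$. This holds because $u_\theta$ depends continuously on $\theta$ in $L^2(Q_\omega)\cap L^p(0,\infty;L^2(\omega))$, and $G$ is continuous (indeed $C^2$) into $W(0,\infty)\cap L^\infty(Q)$. Alternatively, the last statement of Theorem \ref{T2.1} gives this directly, since $u_{\theta_k}\to u_\theta$ strongly.

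Now fix $\rho$ small, say $\rho \le \rho_0$, where $\rho_0$ satisfies $C M_f(\|w\|_{L^2(Q)}+\rho_0)\le \frac12$ whenever $\|w\|_{L^\infty(Q)}\le\rho_0$. To control the $L^2$ part, we need $\|w_\theta\|_{L^2(Q)}$ small. Here I would use the $L^2$ estimate \eqref{EA.2} together with the tail argument of Lemma \ref{LA.3}; that argument applies verbatim to $u_\theta \in \uad^j$ once $\|w_\theta\|_{L^\infty(Q)}$ is small. Under the hypothesis $\|w_\theta\|_{L^\infty(Q)}\le\rho$ this gives
\[
\|r_\theta\|_{L^\infty(Q)}\le \tfrac12\|w_\theta\|_{L^\infty(Q)},
\]
hence
\[
\|w_\theta\|_{L^\infty(Q)}\le 2(1+K_{ad,\infty})\|y_u-\bar y\|_{L^\infty(Q)}.
\]
Choose $\varepsilon_\rho$ so that $2(1+K_{ad,\infty})\varepsilon_\rho<\rho$, with $\varepsilon_\rho$ also below the threshold of Lemma \ref{LA.3}.

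Let $\Theta=\{\theta\in[0,1]:\|w_s\|_{L^\infty(Q)}\le\rho \ \forall s\le\theta\}$. This set contains $0$ and is closed by continuity. By the estimate above, every $\theta\in\Theta$ actually satisfies the strict bound $\|w_\theta\|_{L^\infty(Q)}<\rho$, so by continuity $\Theta$ is also open in $[0,1]$. Hence $\Theta=[0,1]$, which proves \eqref{EA.11}; for $\rho>\rho_0$ we simply use $\varepsilon_{\rho_0}$.
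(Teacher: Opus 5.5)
Your argument is correct, but it takes a different route from the one the paper relies on. The paper gives no proof of its own and defers to \cite[Lemma 3.5]{CMR2019}. The argument there, transferred to this setting, is by contradiction and compactness. If the claim failed, there would be $\rho>0$, $u_k\in\uad^j$ and $\theta_k\in[0,1]$ with $\|y_{u_k}-\bar y\|_{L^\infty(Q)}\to 0$ but $\|y_{\bar u+\theta_k(u_k-\bar u)}-\bar y\|_{L^\infty(Q)}\ge\rho$. After extracting $u_k\rightharpoonup u$ in $L^2(Q_\omega)$ and $\theta_k\to\theta$, the last statement of Theorem~\ref{T2.1} gives $y_u=\bar y$, and the state equation then forces $u=\bar u$ in $Q_\omega$. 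Hence $\bar u+\theta_k(u_k-\bar u)\rightharpoonup\bar u$, and Theorem~\ref{T2.1} once more gives $y_{\bar u+\theta_k(u_k-\bar u)}\to\bar y$ in $L^\infty(Q)$, a contradiction. That proof is shorter and uses only two facts: the weak-to-$L^\infty$ continuity of $u\mapsto y_u$ on $\uad^j$ (which in the infinite-horizon case rests on the uniform tail bound $h$), and injectivity of the control-to-state map. It says nothing, however, about how $\varepsilon_\rho$ depends on $\rho$.

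Your bootstrap uses heavier tools: the $L^\infty$ estimate for the linearized equation from \cite{Casas-Kunisch2025}, \eqref{EA.6}, Lemma~\ref{LA.3} (which uses the same tail bound), and the continuity of $\theta\mapsto y_{u_\theta}$ in $L^\infty(Q)$. In return it is quantitative. Below a fixed threshold you get $\|y_{u_\theta}-\bar y\|_{L^\infty(Q)}\le 2(1+K_{ad,\infty})\|y_u-\bar y\|_{L^\infty(Q)}$ uniformly in $\theta$, so $\varepsilon_\rho$ can be taken linear in $\rho$ for small $\rho$.

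One point should be phrased more carefully. Your definition of $\rho_0$ reads as circular, but it is legitimately fixed by Lemma~\ref{LA.3} applied to elements of $\uad^j$. For instance, take $\rho_0=\min\{\frac{1}{4CM_f},\varepsilon'\}$, with $\varepsilon'$ the threshold that lemma provides for the tolerance $\frac{1}{4CM_f}$. With this choice, the extra requirement that $\varepsilon_\rho$ lie below the threshold of Lemma~\ref{LA.3} is superfluous, since that lemma is only applied to $u_\theta$ inside the continuation argument.
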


The reader is referred to \cite[Lemma 3.5]{CMR2019} for the proof.

\begin{lemma}\label{LA.8}
There exists a constant $K$ such that
\begin{equation}\label{EA.12}
\|z_{\bar u,v}\|_{L^1(Q)} \le K\,\|v\|_{L^1(Q_\omega)}
\qquad \forall\, v\in L^2(Q_\omega)\cap L^1(Q_\omega).
\end{equation}
\end{lemma}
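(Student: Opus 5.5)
The estimate \eqref{EA.12} is an $L^1$ bound for the linearized equation \eqref{E2.7} with $u=\bar u$. I would prove it by duality, reducing it to a uniform $L^\infty(Q)$ bound for an adjoint problem whose right-hand side lies only in $L^\infty(Q)$, not in $L^2(Q)$.

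\textbf{Reduction by duality.} Fix $v\in L^2(Q_\omega)\cap L^1(Q_\omega)$ and $T<\infty$. Then
\[
\|z_{\bar u,v}\|_{L^1(Q_T)}=\sup\Big\{\int_{Q_T} z_{\bar u,v}\psi\dx\dt:\ \psi\in L^\infty(Q_T),\ \|\psi\|_{L^\infty(Q_T)}\le 1\Big\}.
\]
Extend such a $\psi$ by zero for $t>T$; the extension then lies in $L^2(Q)\cap L^p(0,\infty;L^2(\Omega))$. Let $\phi_\psi$ solve the backward problem
\[
-\partial_t\phi+A^*\phi+f'(\bar y)\phi=\psi \ \text{in } Q,\qquad \partial_{n_{A^*}}\phi=0,\qquad \phi(t)=0 \text{ for } t\ge T.
\]
Integrating by parts exactly as in the estimate of $\tau\|w\|_{L^1}$ in Case III gives $\int_{Q_T} z_{\bar u,v}\psi=\int_{Q_\omega}\phi_\psi v$. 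Hence
\[
\|z_{\bar u,v}\|_{L^1(Q_T)}\le \sup_\psi\|\phi_\psi\|_{L^\infty(Q)}\,\|v\|_{L^1(Q_\omega)}.
\]
Once I have a bound $\|\phi_\psi\|_{L^\infty(Q)}\le K$ independent of $T$ and $\psi$, monotone convergence as $T\to\infty$ yields \eqref{EA.12}.

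\textbf{Uniform $L^\infty$ bound via exponential decay.} Let $S(s,t)$, $s\le t$, denote the backward evolution operator of $-\partial_t+A^*+f'(\bar y)$, so that $\phi_\psi(s)=\int_s^T S(s,t)\psi(t)\dt$. It suffices to show that $\|S(s,t)\|_{\mathcal L(L^\infty(\Omega))}\le Ce^{-\mu(t-s)}$ for some $C,\mu>0$ independent of $s,t$; then $\|\phi_\psi\|_{L^\infty(Q)}\le C/\mu$. I would obtain this decay from three ingredients.
\begin{itemize}
\item \emph{$L^2$ decay.} Test the homogeneous equation with $\phi$. Using \eqref{E2.2} and $f'(\bar y)\ge 0$ gives $\|S(s,t)\|_{\mathcal L(L^2)}\le e^{-\Lambda_A(t-s)}$.
\item \emph{$L^\infty$ contraction.} Since $f'(\bar y)\ge0$ and $a_0\ge0$, the maximum principle gives $\|S(s,t)\|_{\mathcal L(L^\infty)}\le 1$.
\item \emph{$L^2\to L^\infty$ smoothing over unit time.} I need $\|S(s,s+1)\|_{\mathcal L(L^2,L^\infty)}\le C_1$ with $C_1$ independent of $s$. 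I would get this from the local parabolic $L^\infty$ estimates already invoked (\cite{Casas-Kunisch2025}, \cite[\S III.10]{Lad-Sol-Ura68}). For $t-s\ge 2$ one applies them with a time cutoff, as in Lemma \ref{LA.5}.
\end{itemize}
For $t-s\ge 2$ I then write
\[
S(s,t)=S(s,s+1)\,S(s+1,t-1)\,S(t-1,t),
\]
where the last factor is viewed as a map $L^\infty\to L^2$, with norm at most $|\Omega|^{1/2}$. Combining the three bounds gives $\|S(s,t)\|_{\mathcal L(L^\infty)}\le C_1|\Omega|^{1/2}e^{-\Lambda_A(t-s-2)}$. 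For $t-s<2$ the contraction bound suffices.

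\textbf{Main obstacle.} The delicate step is the uniformity in $s$ of the constant $C_1$ in the smoothing estimate. The De Giorgi/Moser constants depend only on $n$, $\Omega$, $\Lambda$, $\|a_{ij}\|_\infty$, $\|a_0\|_\infty$ and $\|f'(\bar y)\|_{L^\infty(Q)}$. The last quantity is bounded by $\sup_{|s|\le M_{ad}}|f'(s)|$ thanks to \eqref{E2.6}. These local estimates are invariant under time translation, so the constant does not depend on $s$. If the cited results are only stated on $(0,T)$ with $T$-dependent constants, I would instead apply them on a unit window $(s,s+1)$ after a time shift of the equation, which yields the same constant for every $s$.
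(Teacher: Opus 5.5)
Your argument is correct, but it takes a noticeably heavier route than the paper.

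\textbf{What the paper does.} The paper never dualizes against general bounded test functions; it exploits the sign structure instead. For $v\ge 0$ the maximum principle gives $z_{\bar u,v}\ge 0$. Hence $\|z_{\bar u,v}\|_{L^1(Q_T)}=\int_{Q_T}z_{\bar u,v}\dx\dt=\int_{Q_T}v\phi_T\dx\dt$, where $\phi_T$ solves the backward adjoint equation with right-hand side $1$ and $\phi_T(T)=0$. A general $v$ is handled through $v=v^+-v^-$. The $T$-independent bound on $\phi_T$ comes from a stationary supersolution. Let $\hat\psi$ solve the elliptic Neumann problem $A\hat\psi=1$. Then $\phi_T-\hat\psi$ solves the backward equation with right-hand side $-f'(\bar y)\hat\psi\le 0$ and terminal value $-\hat\psi\le 0$. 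So $0\le\phi_T\le\hat\psi$, and one can take $K=\|\hat\psi\|_{L^\infty(\Omega)}$. Your duality step in fact reduces to this: for a test function $\psi$ with $|\psi|\le 1$, comparison gives $|\phi_\psi|\le\phi_T$. Thus the paper needs only the weak maximum principle and an elliptic $L^\infty$ bound. Its constant depends only on $A$ and $\Omega$, not on $f$ or $\bar u$.

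\textbf{What your route does.} You instead establish exponential decay of the adjoint evolution operator on $L^\infty(\Omega)$. This requires the time-uniform $L^2\to L^\infty$ smoothing you identify as the delicate step. That step is sound here. After one time cutoff $\eta$, as in Lemma \ref{LA.5}, the right-hand side $-\eta'\phi$ is supported in a unit interval. It is bounded in $L^2(Q)\cap L^p(0,\infty;L^2(\Omega))$ by the $L^2(\Omega)$ norm of the terminal datum. So the estimate used there applies with a constant independent of $s$. However, this imports parabolic De Giorgi--Moser type theory, and, as you note, your constant depends on $\|f'(\bar y)\|_{L^\infty(Q)}$.

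\textbf{What each buys.} In exchange you obtain strictly more information: the decay $\|S(s,t)\|_{\mathcal L(L^\infty(\Omega))}\le Ce^{-\mu(t-s)}$. This is a quantitative loss of memory of the adjoint equation, which the paper's supersolution bound (mere uniform boundedness) does not provide.
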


\begin{proof}
For every $T < \infty$ we define the functions $\phi_T \in W(0,T) \cap L^\infty(Q_T)$ as the solution of the equation
\[
\left\{
\begin{array}{l}
- \partial_t \phi_T + A\phi_T + f'(\bar y)\,\phi_T = 1 \text{ in } Q_T,\\
\partial_{n_{A}}\phi_T = 0 \text{ on } \Sigma,\ \ \phi_T(T)  = 0 \text{ in } \Omega,
\end{array}
\right.
\]
From the maximum principle we infer that $\phi_T \ge 0$. We also define the function $\psi \in H^1(\Omega) \cap L^\infty(\Omega)$ solution of
\[
\left\{
\begin{array}{l} A\psi = 1 \text{ in }\Omega,\\
\partial_{n_A}\psi = 0 \text{ on } \Gamma.\end{array}\right.
\]
We obviously have that $\psi \ge 0$. Finally, we set $\psi_T = \phi_T - \psi$ and get
\[
\left\{
\begin{array}{l}
- \partial_t \psi_T + A\psi_T + f'(\bar y)\,\psi_T = -f'(\bar y)\psi \text{ in } Q_T,\\
\partial_{n_{A}}\psi_T = 0 \text{ on } \Sigma,\ \ \psi_T(T)  = -\psi \text{ in } \Omega.
\end{array}
\right.
\]
Since $-f'(\bar y)\psi \le 0$ and $-\psi \le 0$ we deduce that $\psi_T \le 0$ and, consequently, $0 \le \phi_T \le \psi$. This implies that $\|\phi_T\|_{L^\infty(Q_T)}  \le \|\psi\|_{L^\infty(\Omega)}$ for every $T < \infty$.

Now, let us assume that $v \in L^2(Q_\omega) \cap L^1(Q_\omega)$ and $v \ge 0$. Then, we have that $z_{\bar u,v} \ge 0$. From the equations satisfied by $z_{\bar u,v}$ and $\phi_T$, and integrating by parts we get
\[
\int_{Q_T}z_{\bar u,v} \dx\dt = \int_{Q_T}v\phi_T\dx\dt \le \|v\|_{L^1(Q_\omega)}\|\phi_T\|_{L^\infty(Q_T)} \le \|\psi\|_{L^\infty(\Omega)}\|v\|_{L^1(Q_\omega)}.
\]
Thus, we have that $\|z_{\bar u,v}\|_{L^1(Q_T)} \le \|\psi\|_{L^\infty(\Omega)}\|v\|_{L^1(Q_\omega)}$. For a general function $v \in L^1(Q_\omega) \cap L^2(Q_\omega)$, we take $v = v^+ - v^-$ and get
\[
\|z_{\bar u,v}\|_{L^1(Q_T)} \le \|\psi\|_{L^\infty(\Omega)}(\|v^+\|_{L^1(Q_\omega)} + \|v^-\|_{L^1(Q_\omega)}) = \|\psi\|_{L^\infty(\Omega)}\|v\|_{L^1(Q_\omega)}.
\]
Taking the supremum as $T \to \infty$ we deduce \eqref{EA.12} with $K = \|\psi\|_{L^\infty(\Omega)}$.
\end{proof}

\bibliographystyle{siamplain}
\bibliography{Eduardo-Nicolai}
\end{document}